\theoremstyle{theorem}
\newtheorem{Def}{Definition}[section]
\newtheorem{Prop}[Def]{Proposition}
\newtheorem{Lem}[Def]{Lemma}
\newtheorem{Thm}[Def]{Theorem}
\newtheorem{Cor}[Def]{Corollary}
\newtheorem{Ass}[Def]{Assumption}
\theoremstyle{definition}
\newtheorem{Rem}[Def]{Remark}
\newcommand{\p}{\mathbb{P}}
\newcommand{\e}{\mathbb{E}}
\newcommand{\real}{\mathbb{R}}
\newcommand{\n}{\mathbb{N}}
\newcommand{\1}{{\bf 1}}
\newcommand{\ksm}{K_\sigma}
\begin{document}

\title{On the Euler-Maruyama approximation for one-dimensional stochastic differential equations with irregular coefficients}
\author
{
	Hoang-Long Ngo\footnote{Hanoi National University of Education, 136 Xuan 
	Thuy - Cau Giay - Hanoi - Vietnam, email: $\qquad$ngolong@hnue.edu.vn}
	$\quad $ and $\quad$ 
	Dai Taguchi\footnote{Ritsumeikan University, 1-1-1 Nojihigashi, Kusatsu, 
	Shiga, 525-8577, Japan, email: dai.taguchi.dai@gmail.com }
}

\date{}
\maketitle
\begin{abstract}
We study the strong rates of the Euler-Maruyama approximation for one dimensional stochastic differential equations whose drift coefficient  may be neither continuous nor one-sided Lipschitz and diffusion coefficient is H\"older continuous. Especially, we show that the strong rate of the Euler-Maruyama approximation is 1/2 for a large class of  equations whose drift is not continuous. We also provide the strong rate for equations  whose drift is H\"older continuous and diffusion is nonconstant.\\\\
\textbf{2010 Mathematics Subject Classification}: 60H35; 41A25; 60C30\\\\
\textbf{Keywords}:
Euler-Maruyama approximation $\cdot$
Strong rate of convergence $\cdot$
Stochastic differential equation $\cdot$
Irregular coefficients
\end{abstract}

\section{Introduction} \label{Sec_1}
Let us consider the one-dimensional stochastic differential equation (SDE)
\begin{align}\label{SDE_1}
X_t=
x_0
+\int_0^t b(X_s)ds
+\int_0^t \sigma(X_s)dW_s,
~x_0 \in \real,
~t \in [0,T],
\end{align}
where $W:=(W_t)_{0\leq t \leq T}$ is a standard one-dimensional Brownian motion on a probability space $(\Omega, \mathcal{F},\p)$ with a filtration $(\mathcal{F}_t)_{0\leq t \leq T}$ satisfying the usual conditions. 

It is well-known that the solution to the SDE \eqref{SDE_1} is related to the Kolmogorov equation.
Stroock and Varadhan \cite{SV69} prove that if the drift coefficient $b$ is bounded, measurable and the diffusion coefficient $\sigma$ is bounded, uniformly elliptic and continuous, then a solution to the Kolmogorov equation $\frac{\partial u}{\partial t}+b\frac{\partial u}{\partial x}+\sigma^2 \frac{\partial^2 u}{\partial^2 x}=0$ with the boundary condition $u(T,x)=f(x)$, in the class $W^{1,2}_p$ with $p >3/2$ admits the stochastic representation (see also Theorem 1 in \cite{Zv}).
Zvonkin \cite{Zv} studied the existence and uniqueness of solution of the SDE \eqref{SDE_1} under very weak regularity assumption of coefficients $b$ and $\sigma$.
In particular, he showed that if $b$ is bounded, measurable and $\sigma$ is bounded, uniformly elliptic and  $(\alpha + \frac 12)$-H\"older continuous for some $\alpha \in [0, \frac 12]$ then equation \eqref{SDE_1} has a unique strong solution (see also Veretennikov \cite{V}).

Since the solution of (\ref{SDE_1}) is rarely analytically tractable, one often approximates $X=(X_t)_{0 \leq t \leq T}$ by using the Euler-Maruyama scheme given by 
\begin{align*}
X_t^{(n)} 
&= x_0 +\int_0^tb\left(X_{\eta _n(s)}^{(n)}\right)ds +\int_0^t \sigma\left(X_{\eta _n(s)}^{(n)}\right) dW_s,~t \in [0,T],
\end{align*}
where $\eta _n(s) = kT/n=:t_k^{(n)}$ if $ s \in \left[kT/n, (k+1)T/n \right)$. 

Both the strong and weak rates of convergence of $X^{(n)}$ to $X$ are known when $b$ and $\sigma$ satisfy some Lipschitz continuous condition (see \cite{KP,BT}).
It has been shown recently that 
there exist SDEs with smooth and bounded coefficients such that neither the EM approximation nor any approximation method based on finitely many observations of the driving Brownian motion can converge in absolute mean to the solution faster than any given speed of convergence (see \cite{HHJ, JMY}).
However, there are few results when $b$ is irregular. 
When $b$ is not continuous, most of the works so far need the assumption that $b$ is one-sided Lipschitz to establish the rate of convergence (see \cite{G98, NT}).
Note that this one-sided Lipschitz condition  also plays an indispensable role to establish the rate of convergence for SDEs with super-linear growth coefficients (see \cite{HJK}, \cite{CJM}).
Outside the framework of one-sided Lipschitz, Halidias and Kloeden \cite{HK} showed that $X^{(n)}$ converges to $X$ in $L^2$-norm if $b$ is increasing, continuous from bellow  and $\sigma$ is Lipschitz continuous.
Since their proof uses upper and lower solutions of the SDEs and the Euler-Maruyama approximation, it is hardly possible to get any rate of convergence by using their method.  
Recently, Leobacher and Sz\"olgyenyi \cite{LeSz} studied the SDE \eqref{SDE_1} under the assumption that $b$ is piecewise Lipschitz, has a finite number of discontinuous points and $\sigma$ is Lipschitz and uniformly elliptic.
They introduced a clever way to transfer equation \eqref{SDE_1} to an equivalent  equation whose coefficients are Lipschitz continuous and therefore the new equation can be approximated by an Euler-Maruyama scheme with the standard rate of convergence $1/2$. 

The strong rates of the Euler-Maruyama approximation for SDEs with H\"older continuous diffusion coefficient were first established in \cite{Y, BBD, Gyongy}.
The main idea in \cite{Gyongy} is to  use the so-called Yamada-Watanabe approximation method to estimate the error. This remarkable idea has been developed in \cite{BY,NT, LT} to obtain strong rate under various assumptions on coefficients $b$ and $\sigma$.
It is undoubted  that Yamada-Watanabe approximation is still a key tool to deal with the H\"older continuity of $\sigma$ in this paper.

When $b$ is only H\"older continuous of order $\beta \in (0,1]$ and $\sigma$ is a non-zero constant, Menoukeu-Pamen and Taguchi \cite{MeTa} have used a PDE technique to show very recently that the strong rate of the Euler-Maruyama approximation is of order $\beta/2$.

In this paper, we will study the rates of strong convergence of the Euler-Maruyama approximation for  SDE \eqref{SDE_1} when the coefficients $b$ and $\sigma$ may have a very low regularity.
In particular, we consider the case that $\sigma$ is $(\alpha + \frac 12)$-H\"older continuous and $b = b_{A} + b_H$ where $b_{A}$ is, roughly speaking, a function of bounded variation on compact sets and $b_H$ is  H\"older continuous of some order $\beta \in (0,1]$.
Note that $b$ is not necessary continuous or one-sided Lipschitz function.
By introducing a new approach based on the removal drift transformation, we are able to establish the rates of convergence of $X^{(n)}$ to $X$ in $L^1$, $L^1$-sup and $L^p$-sup norm $(p\geq 2)$.
Our finding partly improves upon recent results in \cite{MeTa, NT, Gyongy} as well as the well-known ones in \cite{HK, G98} in the one-dimensional setting (see Remark \ref{compare}). It worth noting that SDEs with discontinuous drift appear in many applications such as mathematical finance, optimal control and interacting infinite particle systems \cite{AI, BSW, CE, CS, KR}.

The remainder of the paper is structured as follows.
In the next section we introduce some notations and assumption for our framework together with the main results.
All proof are deferred to Section 3.

\section{Main results}\label{sec:preliminaries}



\subsection{Notations}
For bounded measurable function $f$ on $\real$, we define $\|f\|_{\infty}:=\sup_{x \in \real} |f(x)|$.
We denote by $L^1(\real)$ the space of all integrable functions on $\real$ with semi-norm $\|f\|_{L^1(\real)}:=\int_{\real} |f(x)|dx$.
For $\beta \in (0,1]$, we denote by $H^{\beta}$ the set of all functions from $\real $ to $\real$ which are bounded and $\beta$-H\"older continuous, i.e., a function $f \in H^{\beta}$ iff
\begin{align*}
\|f\|_\beta := \|f\|_\infty+ \sup_{x,y \in \real, x \neq y} \frac{|f(x)-f(y)|}{|x-y|^{\beta}} < \infty.
\end{align*}

We recall the following  class of functions $\mathcal{A}$ which is first introduced in \cite{KMN} (see also \cite{NT}).
Let $\mathcal{A}$ be a class of all bounded measurable functions $\zeta : \real \to \real$ such that there exist a finite positive constant $K_{\mathcal{A}}$ and a sequence of functions $(\zeta _N)_{N \in \n} \subset C^1(\real)$ satisfying:
$$\begin{cases}
\mathcal{A}(i): & \zeta _N \to \zeta  \text{ in } L^1_{loc}(\real), $ as $ N \to \infty,\\
\mathcal{A}(ii): &\sup_{N \in \n} |\zeta _N(x)| +|\zeta (x)|\leq K_{\mathcal{A}},\\ 
\mathcal{A}(iii): & \displaystyle \sup_{N \in \n, a \in \real} \int_{\real}  |\zeta_N'(x+a)| e^{-|x|^2/u}dx \leq  (1+\sqrt{u})K_{\mathcal{A}} \quad \text{for all } u >0. 
\end{cases}$$
We denote $\|\zeta\|_\mathcal{A}$ the smallest constant $K_{\mathcal{A}}$ satisfying the above conditions. 
The class $\mathcal{A}$ will be used to model a part of the drift coefficient $b$.

It is easy to verify that the class $\mathcal{A}$ contains all $C^1(\real)$ functions which has the first order derivative of polynomially bounded.
Furthermore, the class $\mathcal{A}$ contains also some non-smooth  functions of the type $\zeta(x)=(x-a)^+ \wedge 1$ or $\zeta(x)=\1_{b<x<c}$ for some $a\in \real$, $b,c \in [-\infty,\infty]$.

The following propositions shows that this class  is quite large. 

\begin{Prop}\label{exp_1}
	(i) If $\xi, \zeta \in \mathcal{A}$ and $\alpha, \beta \in \real$, then $\xi \zeta \in \mathcal{A}$ and $\alpha \xi + \beta \zeta \in \mathcal{A}$.
	(ii) If $\zeta : \real \rightarrow \real$ is a bounded measurable and monotone function, then $\zeta \in \mathcal{A}$.
\end{Prop}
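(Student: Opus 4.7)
The plan is to verify the defining conditions $\mathcal{A}(i)$--$\mathcal{A}(iii)$ directly for each construction, using the approximating sequences that come with $\xi,\zeta\in\mathcal{A}$ as building blocks. For part (i) I would take $\alpha\xi_N+\beta\zeta_N$ for the linear combination and $\xi_N\zeta_N$ for the product, where $(\xi_N),(\zeta_N)\subset C^1(\real)$ are the approximations from $\xi,\zeta\in\mathcal{A}$. The linear case reduces term-by-term to the triangle inequality. For the product, the split $\xi_N\zeta_N-\xi\zeta = \xi_N(\zeta_N-\zeta) + (\xi_N-\xi)\zeta$ combined with the uniform sup bounds $\|\xi_N\|_\infty,\|\zeta_N\|_\infty\le K_\mathcal{A}$ (from $\mathcal{A}(ii)$) gives $\mathcal{A}(i)$ on compact sets; $\mathcal{A}(ii)$ is immediate with a constant of order $K_\xi K_\zeta$; and Leibniz's rule $(\xi_N\zeta_N)' = \xi_N'\zeta_N + \xi_N\zeta_N'$ together with the uniform sup bounds reduces $\mathcal{A}(iii)$ to the corresponding condition for $\xi_N$ and $\zeta_N$ separately, each of which is available by hypothesis.

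For part (ii), WLOG $\zeta$ is nondecreasing (else pass to $-\zeta$). I would set $\zeta_N := \zeta*\rho_N$ with $\rho_N(x)=N\rho(Nx)$ for a standard smooth compactly supported mollifier $\rho$ of unit mass. Then $\zeta_N\in C^\infty(\real)$, $\|\zeta_N\|_\infty\le\|\zeta\|_\infty$, and since any bounded monotone function has at most countably many discontinuities, $\zeta_N\to\zeta$ pointwise a.e.\ and hence in $L^1_{loc}$ by dominated convergence. The crucial point is that $\zeta_N$ inherits monotonicity from $\zeta$, so $\zeta_N'\ge 0$; this lets me drop the absolute value and estimate
\begin{align*}
\int_\real |\zeta_N'(x+a)|\, e^{-|x|^2/u}\, dx
\le \int_\real \zeta_N'(y)\, dy
= \zeta(+\infty)-\zeta(-\infty)
\le 2\|\zeta\|_\infty,
\end{align*}
uniformly in $N,a$, and $u>0$ (the limits $\zeta_N(\pm\infty)=\zeta(\pm\infty)$ follow from bounded convergence applied to the mollification integral). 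Setting $K_\mathcal{A}:=2\|\zeta\|_\infty$ then fulfills $\mathcal{A}(ii)$ and $\mathcal{A}(iii)$ simultaneously.

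\textbf{Main obstacle.} The only delicate step is $\mathcal{A}(iii)$ in part (ii). Near a jump of $\zeta$ the mollified derivative $\zeta_N'$ concentrates with peak height of order $N$, so no pointwise bound on $\zeta_N'$ is available and any naive Gaussian-weighted estimate would blow up as $N\to\infty$. Monotonicity is precisely what rescues the argument, forcing $\zeta_N'$ to keep a definite sign so that $\int_\real \zeta_N'$ realizes as a telescoping difference controlled by the total oscillation $2\|\zeta\|_\infty$, regardless of how the mass of $\zeta_N'$ concentrates. Without this sign, the class $\mathcal{A}$ would not accommodate step-like functions at all.
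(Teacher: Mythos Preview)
The paper does not actually prove this proposition; it simply states that ``the proofs of Proposition \ref{exp_1} and further properties of class $\mathcal{A}$ were presented in \cite{KMN} and \cite{NT}.'' Your argument is correct and is the natural one that appears in those references: for (i), building the approximating sequences from the given ones and using Leibniz's rule together with the uniform sup bounds from $\mathcal{A}(ii)$; for (ii), mollifying and exploiting that monotonicity is preserved so that $\zeta_N'\ge 0$ and $\int_\real |\zeta_N'(x+a)|\,e^{-|x|^2/u}\,dx \le \int_\real \zeta_N' \le 2\|\zeta\|_\infty$ uniformly in $N$, $a$ and $u$.
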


The proofs of Proposition \ref{exp_1} and further properties of class $\mathcal{A}$ were presented in \cite{KMN} and \cite{NT}.
It is worth noting that if a function $\zeta \in \mathcal{A}$ has a compact support then it follows from Theorem 3.9 \cite{AFP} that $\zeta$ is of bounded variation.
Therefore  class $\mathcal{A}$ does not contain class of H\"older continuous function $H^\beta$ for any $\beta \in (0,1)$.

\subsection{Main results}

We need the following assumptions on the coefficients $b$ and $\sigma$.

\begin{Ass} \label{Ass_1}
	We assume that the coefficients $b$ and $\sigma $ are measurable functions and satisfy the following conditions:
	\begin{itemize}
		\item[(i)] $b=b_A+b_H \in L^1(\real) $ where $b_A \in \mathcal{A}$ and $b_H \in H^{\beta}$ with $\beta \in (0,1]$.
		
		\item[(ii)] $\sigma$ is uniform elliptic and globally bounded and globally H\"older continuous: there exist real numbers $\ksm > 1$ and $\alpha \in [0,\frac12]$ such that  
		\begin{align*}
		\frac{1}{\ksm^2} \leq \sigma^2(x) \leq \ksm^2 \text{ for any } x \in \real,
		\end{align*}
	and $$|\sigma(x) - \sigma(y)| \leq \ksm |x-y|^{\frac 12 + \alpha} \text{ for any } x , y \in \real.$$
	\end{itemize}
\end{Ass}


We obtain the following results on the rate of the Euler-Maruyama approximation in both $L^1$-norm and $L^1$-$\sup$ norm.


\begin{Thm} \label{Main_1}
	Let Assumption \ref{Ass_1} hold.
	Then there exists a constant $C^*$ which depends on $K_\sigma, \|b_A\|_{\mathcal{A}}, \|b_H\|_\beta, \|b\|_{L^1(\real)}, T, x_0, \alpha$ and $\beta$ such that
	\begin{align} \label{est_l1}
	\sup_{\tau \in \mathcal{T}}\e[|X_{\tau}-X_{\tau}^{(n)}|] 
	&\leq \left\{ \begin{array}{ll}
	\displaystyle \frac{C^*}{\log n} &\text{ if } \alpha = 0,  \\
	\displaystyle \frac{C^*}{n^{\frac{\beta}{2}\wedge \alpha }} &\text{ if } \alpha \in (0, 1/2],
	\end{array}\right.
	\end{align}
	where $\mathcal{T}$ is the set of all stopping times $\tau \leq T$. Moreover, for any $\gamma \in (0,1)$, it holds
	\begin{align}  \label{est_l1b}
	\e[\sup_{0 \leq s \leq T} |X_s-X_s^{(n)}|^\gamma] 
	&\leq \left\{ \begin{array}{ll}
	\displaystyle \dfrac{2-\gamma}{1-\gamma}\Big(\frac{C^*}{\log n}\Big)^\gamma &\text{ if } \alpha = 0,  \\
	\displaystyle \dfrac{2-\gamma}{1-\gamma} \Big(\frac{C^*}{n^{\frac{\beta}{2}\wedge \alpha }}\Big)^\gamma &\text{ if } \alpha \in (0, 1/2].
	\end{array}\right.
	\end{align}
	
\end{Thm}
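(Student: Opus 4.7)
\medskip

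\noindent\textbf{Proof proposal.}
My plan is to use a Zvonkin-type drift removal transformation to absorb the irregular part $b_A$, then apply the Yamada--Watanabe regularization to handle the H\"older diffusion. Define
\[
U'(x)=\exp\!\Bigl(-\int_0^{x}\frac{2\,b_A(y)}{\sigma^{2}(y)}\,dy\Bigr),\qquad U(x)=\int_0^{x}U'(y)\,dy.
\]
Since $|b_A|\le\|b_A\|_{\mathcal A}$ and $\sigma^2\ge K_\sigma^{-2}$, the derivative $U'$ is bounded between two positive constants, so $U$ is a bi-Lipschitz bijection on $\real$ and $|X_t-X_t^{(n)}|$ is comparable to $|U(X_t)-U(X_t^{(n)})|$. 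Formally $\tfrac12 U''\sigma^{2}+U'b_A\equiv0$, so $U$ removes the $b_A$-drift from \eqref{SDE_1}. Since $b_A$ need not be continuous, I will justify It\^o's formula on $U$ by approximating $b_A$ by the $C^1$ functions $b_{A,N}$ from condition $\mathcal A$, building $U_N$ from $b_{A,N}$, and passing to the limit using $\mathcal A(i)$--$\mathcal A(iii)$.

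Applying It\^o's formula then yields
\begin{align*}
U(X_t)&=U(x_0)+\int_0^{t}U'(X_s)b_H(X_s)\,ds+\int_0^{t}U'(X_s)\sigma(X_s)\,dW_s,\\
U(X_t^{(n)})&=U(x_0)+\int_0^{t}\bigl[U'(X_s^{(n)})b(X_{\eta_n(s)}^{(n)})-U'(X_s^{(n)})b_A(X_s^{(n)})\tfrac{\sigma^{2}(X_{\eta_n(s)}^{(n)})}{\sigma^{2}(X_s^{(n)})}\bigr]ds\\
&\qquad+\int_0^{t}U'(X_s^{(n)})\sigma(X_{\eta_n(s)}^{(n)})\,dW_s,
\end{align*}
where in the second line the correction comes from $\tfrac12U''\sigma^{2}=-U'b_A$. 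Setting $D_t=U(X_t)-U(X_t^{(n)})$, I would then regularize $|D_t|$ by the Yamada--Watanabe function $\varphi_{\epsilon,\delta}$ tuned to the H\"older exponent $\alpha+\tfrac12$ of $\sigma$, so that $\varphi_{\epsilon,\delta}''(x)|x|^{1+2\alpha}$ is uniformly small. Applying It\^o to $\varphi_{\epsilon,\delta}(D_t)$ and taking expectation at a stopping time $\tau\le T$ kills the martingale part and splits the error into three pieces: a $b_H$-mismatch term, a $b_A$-mismatch term, and a quadratic variation term from $\sigma$.

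The $b_H$-term is estimated via $\|b_H\|_\beta$ and the standard moment bound $\e|X_s^{(n)}-X_{\eta_n(s)}^{(n)}|^\beta\lesssim n^{-\beta/2}$, producing the $n^{-\beta/2}$ contribution. The $\sigma$-term, thanks to $\varphi_{\epsilon,\delta}''$, contributes the $n^{-\alpha}$ rate (and the $1/\log n$ rate when $\alpha=0$), exactly as in the Gy\"ongy--Yamada--Watanabe program. The \emph{main obstacle} is the $b_A$-mismatch term
\[
\e\!\int_0^{\tau}\!U'(X_s^{(n)})\Bigl(b_A(X_{\eta_n(s)}^{(n)})-b_A(X_s^{(n)})\tfrac{\sigma^{2}(X_{\eta_n(s)}^{(n)})}{\sigma^{2}(X_s^{(n)})}\Bigr)\,ds,
\]
since $b_A$ is only measurable. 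Here I would condition on $\mathcal F_{\eta_n(s)}$ so that $X_s^{(n)}-X_{\eta_n(s)}^{(n)}$ is Gaussian with variance $\sim\sigma^2(X_{\eta_n(s)}^{(n)})(s-\eta_n(s))$, write $b_{A,N}(X_s^{(n)})-b_{A,N}(X_{\eta_n(s)}^{(n)})=\int_0^{\Delta_s}b_{A,N}'(X_{\eta_n(s)}^{(n)}+r)\,dr$, and apply Fubini together with condition $\mathcal A(iii)$ (with $u$ proportional to $s-\eta_n(s)$) to absorb the singular density factor. Integrating in $s$ across one mesh interval gives an $O(n^{-1/2})$ contribution per step and thus a controllable total, compatible with the claimed rate.

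Combining everything through a Gronwall-type argument on $\e[\varphi_{\epsilon,\delta}(D_{\tau\wedge\cdot})]$, letting $N\to\infty$ and optimizing $\epsilon,\delta$, yields \eqref{est_l1} uniformly in stopping times $\tau\le T$. The $L^1$-sup bound \eqref{est_l1b} then follows from Lenglart's domination inequality applied to the nonnegative adapted process $|X_t-X_t^{(n)}|$: the prefactor $(2-\gamma)/(1-\gamma)$ and the $\gamma$-th power of the right-hand side of \eqref{est_l1} are precisely what Lenglart's inequality produces from a uniform bound over stopping times.
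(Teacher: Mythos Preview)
There is a genuine gap at the very first step. You claim that boundedness of $b_A$ (namely $|b_A|\le\|b_A\|_{\mathcal A}$) together with $\sigma^2\ge K_\sigma^{-2}$ forces $U'(x)=\exp\bigl(-\int_0^x 2b_A(y)/\sigma^2(y)\,dy\bigr)$ to lie between two positive constants. This is false: boundedness of $b_A$ only gives $\bigl|\int_0^x 2b_A/\sigma^2\bigr|\le 2K_\sigma^2\|b_A\|_{\mathcal A}\,|x|$, so $U'$ may grow or decay exponentially. Under Assumption~\ref{Ass_1} one has $b=b_A+b_H\in L^1(\real)$, but neither $b_A$ nor $b_H$ need be integrable separately (e.g.\ $b_H$ could be a nonzero constant), so you cannot rescue the bound by appealing to $b_A\in L^1$. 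Without two-sided bounds on $U'$, the bi-Lipschitz comparison $|X_t-X_t^{(n)}|\asymp|U(X_t)-U(X_t^{(n)})|$ collapses, $U''$ is unbounded, and every subsequent estimate (the $J$-terms, the martingale bound, the final inversion) breaks down.

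The paper's fix is to build the scale function from the \emph{entire} drift $b$, setting $\varphi'(x)=\exp\bigl(-\int_0^x 2b/\sigma^2\bigr)$; then $b\in L^1(\real)$ gives $C_0^{-1}\le\varphi'\le C_0$ with $C_0=e^{2K_\sigma^2\|b\|_{L^1}}$. This has a second payoff you miss: because the full drift is removed, $Y_t=\varphi(X_t)$ is a pure local martingale, and the only drift in $\phi_{\delta,\varepsilon}(Y_t-Y_t^{(n)})$ comes from the discretization of $Y^{(n)}$. After splitting that drift into a $b_A$-increment piece (handled by Lemma~\ref{Lem}, which is precisely the $\mathcal A(iii)$ computation you sketch), a $b_H$-H\"older piece, and a $\sigma$-H\"older piece, the bound on $\e[|X_\tau-X_\tau^{(n)}|]$ follows by direct substitution of $(\varepsilon,\delta)$---no Gronwall argument is needed. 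Your route, removing only $b_A$, would leave a residual term $\int_0^t|U'(X_s)b_H(X_s)-U'(X_s^{(n)})b_H(X_s^{(n)})|\,ds$ contributing $\int_0^t|X_s-X_s^{(n)}|^\beta\,ds$, which for $\beta<1$ requires a Bihari-type inequality and does not obviously close at the stated rate. Your final step (Lenglart's inequality to pass from the uniform-in-$\tau$ bound to \eqref{est_l1b}) is correct and matches the paper's citation of the Gy\"ongy--Krylov lemma.
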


The assumption that $b \in L^1(\real)$ is in fact quite restricted  since it excludes  some simple function such as $b(x) = \1_{x \geq 0}$. Fortunately, removing that assumption does not affect much on the strong rate of convergence as shown in the following theorem.
\begin{Thm} \label{Main_4}
	Suppose that the drift coefficient $b=b_A+b_H$, where $b_A \in \mathcal{A}$ and $b_H \in H^{\beta}$ with $\beta \in (0,1]$.
	Let Assumption \ref{Ass_1} (ii)  hold.
	Then there exists a constant $C^*$ which depends on $K_\sigma, \|b_A\|_{\mathcal{A}}, \|b_H\|_\beta, T, x_0, \alpha$ and $\beta$ 
	 such that
	\begin{align} \label{est_l4}
	\sup_{\tau \in \mathcal{T}}\e[|X_{\tau}-X_{\tau}^{(n)}|] 
	&\leq \left\{ \begin{array}{ll}
	\displaystyle \frac{C^*e^{C^*\sqrt{\log (\log n)}}}{\log n	} &\text{ if } \alpha = 0,  \\
	\displaystyle \frac{C^* e^{C^*\sqrt{\log n}}}{n^{\frac{\beta}{2}\wedge \alpha }} &\text{ if } \alpha \in (0, 1/2].
	\end{array}\right.
	\end{align}
	Moreover, for any $\gamma \in (0,1)$, it holds
	\begin{align} \label{est_l4b}
		\e[\sup_{0 \leq s \leq T} |X_s-X_s^{(n)}|^\gamma] 
	&\leq \left\{ \begin{array}{ll}
	\dfrac{2-\gamma}{1-\gamma}\Big(\displaystyle \frac{C^*e^{C^*\sqrt{\log (\log n)}}}{\log n}\Big)^\gamma  &\text{ if } \alpha = 0,  \\
	\dfrac{2-\gamma}{1-\gamma} \Big(\displaystyle \frac{C^* e^{C^*\sqrt{\log n}}}{n^{\frac{\beta}{2}\wedge \alpha }} \Big)^\gamma &\text{ if } \alpha \in (0, 1/2].
	\end{array}\right.
	\end{align}
	
\end{Thm}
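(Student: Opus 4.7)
The plan is to reduce Theorem~\ref{Main_4} to Theorem~\ref{Main_1} by truncating the drift to a compact set, localizing both $X$ and $X^{(n)}$ to that set, and then optimizing the truncation scale so that the exit probability balances the blow-up of $\|b^R\|_{L^1}$ which this truncation introduces.

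Fix $R>1$ and pick a smooth cutoff $\chi_R:\real\to[0,1]$ with $\chi_R\equiv 1$ on $[-R+1,R-1]$, $\chi_R\equiv 0$ outside $[-R,R]$, and $\|\chi_R'\|_\infty\le 1$. Set $b_A^R:=b_A\chi_R$, $b_H^R:=b_H\chi_R$, and $b^R:=b_A^R+b_H^R$, so that $b^R\in L^1(\real)$ with $\|b^R\|_{L^1(\real)}\le 2R\|b\|_\infty$. The crucial point is that the two other relevant norms do not grow with $R$: $\|b_H^R\|_\beta\le 2\|b_H\|_\beta$ because $\chi_R$ is bounded by $1$ and $1$-Lipschitz (so $|\chi_R(x)-\chi_R(y)|\le|x-y|^\beta$ uniformly); and $\|b_A^R\|_\mathcal{A}\le C\|b_A\|_\mathcal{A}$ because, taking the approximating sequence $b_{A,N}\chi_R$ and applying the product rule, the extra term $b_{A,N}(x+a)\chi_R'(x+a)$ contributes at most $\|b_A\|_\infty\|\chi_R'\|_{L^1(\real)}\le 2\|b_A\|_\infty$ to the integral in condition~$\mathcal{A}(iii)$, which can be absorbed into the factor $(1+\sqrt u)$. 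Hence $(b^R,\sigma)$ satisfies the hypotheses of Theorem~\ref{Main_1}.

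Let $X^R$ and $X^{R,(n)}$ be the solution and Euler--Maruyama approximation driven by the same Brownian motion with drift $b^R$ from the same $x_0$, and set $\tau_R:=\inf\{t\ge 0:|X_t|\vee|X_t^{(n)}|\ge R-1\}$. Since $b\equiv b^R$ on $[-R+1,R-1]$, pathwise uniqueness (Zvonkin) gives $X\equiv X^R$ on $[0,\tau_R]$ and induction on the grid gives $X^{(n)}\equiv X^{R,(n)}$ on $[0,\tau_R]$. Splitting on $\{\tau_R>T\}$ and its complement, and using Cauchy--Schwarz with the uniform-in-$R$ second-moment bounds coming from boundedness of $b,\sigma$,
\begin{align*}
\e|X_\tau-X_\tau^{(n)}|
&\le \e|X_\tau^R-X_\tau^{R,(n)}|+C\,\p(\tau_R\le T)^{1/2}.
\end{align*}
A Bernstein-type estimate on the martingale parts of $X$ and $X^{(n)}$ (quadratic variation $\le K_\sigma^2 T$, drift bounded by $\|b\|_\infty T$) yields $\p(\tau_R\le T)\le Ce^{-c(R-1)^2}$. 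Applying Theorem~\ref{Main_1} to $(b^R,\sigma)$, whose only $R$-dependent input is $\|b^R\|_{L^1}=O(R)$, and tracking through that proof the (at worst exponential) dependence $C^*\lesssim e^{c\|b\|_{L^1}}$ on the $L^1$-norm, gives
\begin{align*}
\sup_{\tau\in\mathcal{T}}\e|X_\tau-X_\tau^{(n)}|
&\le Ce^{cR}\,r_n+Ce^{-c(R-1)^2/2},
\end{align*}
with $r_n=(\log n)^{-1}$ for $\alpha=0$ and $r_n=n^{-(\beta/2)\wedge\alpha}$ for $\alpha\in(0,1/2]$. The choice $R\sim\sqrt{\log n}$ (respectively $R\sim\sqrt{\log\log n}$) balances the two terms and produces~\eqref{est_l4}. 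Inequality~\eqref{est_l4b} then follows from~\eqref{est_l4} by the same optional-stopping plus layer-cake argument used to derive~\eqref{est_l1b} from~\eqref{est_l1}.

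The main obstacle is verifying that the only $R$-dependent norm entering the constant from Theorem~\ref{Main_1} is $\|b^R\|_{L^1}=O(R)$. This rests on the universal bound $\|\chi_R'\|_{L^1}\le 2$ (independent of $R$), which is precisely what is needed to control $\|b_A\chi_R\|_\mathcal{A}$ by $\|b_A\|_\mathcal{A}$; condition~$\mathcal{A}(iii)$ is tailored to exactly this kind of manipulation. The secondary technical subtlety is pinning down how $C^*$ depends quantitatively on $\|b\|_{L^1}$ inside Theorem~\ref{Main_1}: the factor $e^{C^*\sqrt{\log n}}$ advertised in~\eqref{est_l4} is forced by this dependence being at most exponential, and this must be read off from the proof of Theorem~\ref{Main_1} before the balancing $R\sim\sqrt{\log n}$ can deliver the claimed rate.
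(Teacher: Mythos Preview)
Your proposal is correct and follows essentially the same approach as the paper: truncate $b$ by a smooth cutoff $g_m$ (your $\chi_R$), verify that $\|b_A g_m\|_{\mathcal A}$ and $\|b_H g_m\|_\beta$ stay bounded independently of the truncation level while $\|b g_m\|_{L^1}$ grows linearly, apply Theorem~\ref{Main_1} to the truncated SDE with its explicit exponential dependence on $\|b\|_{L^1}$ (via $C_0=e^{2K_\sigma^2\|b\|_{L^1}}$), control the localization error by a Gaussian tail bound on the exit probability, and balance by choosing $m\sim\sqrt{\log n}$ (resp.\ $\sqrt{\log\log n}$). The only cosmetic difference is that the paper uses a three-term triangle inequality $|X-\overline X^m|+|\overline X^m-\overline X^{m,n}|+|\overline X^{m,n}-X^{(n)}|$ with separate exit estimates for $X$ and $X^{(n)}$, whereas you package both localizations into a single stopping time $\tau_R$; both routes lead to the same bound.
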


\begin{Rem}
	Note that the function $x \mapsto e^{\sqrt{\log x}}$ increases faster than any polynomial function of $\log x$ but slower than any polynomial of $x$, i.e, for any $\varepsilon >0$ and $k>0$, 
	$$\lim_{x \to +\infty} \frac{e^{\sqrt{\log x}}}{x^\varepsilon} = \lim_{x \to +\infty} \frac{(\log x)^k}{e^{\sqrt{\log x}}} = 0.$$
\end{Rem}
The estimates \eqref{est_l1b} and \eqref{est_l4b} become worst when $\gamma \uparrow 1$. Fortunately, we have the following bounds for the $L^1$-$\sup$ norm. 
\begin{Thm} \label{Main_2}
	Let Assumption \ref{Ass_1} hold.
	Then there exists a constant $C$ which depends on $K_\sigma, \|b_A\|_{\mathcal{A}}, \|b_H\|_\beta, \|b\|_{L^1(\real)}, T, x_0, \alpha$ and $\beta$ such that
	\begin{align*}
	\e[\sup_{0 \leq s \leq T} |X_s-X_s^{(n)}|] 
	&\leq \left\{ \begin{array}{ll}
	\displaystyle \frac{C}{\sqrt{\log n}} &\text{ if } \alpha = 0,  \\
	\displaystyle \frac{C}{n^{\alpha ( \beta \wedge 2\alpha) }} &\text{ if } \alpha \in (0, 1/2].
	\end{array}\right.
	\end{align*}
\end{Thm}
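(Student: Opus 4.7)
The strategy is to combine the $L^\gamma$-supremum bound \eqref{est_l1b} of Theorem \ref{Main_1} with uniform polynomial moment estimates on $X - X^{(n)}$ via Lyapunov interpolation. Throughout, write $\epsilon_n$ for the rate appearing in Theorem \ref{Main_1} (so $\epsilon_n=1/\log n$ when $\alpha=0$ and $\epsilon_n=n^{-(\beta/2\wedge\alpha)}$ when $\alpha\in(0,1/2]$).

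Since $b$ and $\sigma$ are bounded, standard Burkholder--Davis--Gundy estimates yield $\e\bigl[\sup_{0\le s\le T}|X_s - X_s^{(n)}|^p\bigr]\le C_p$ for every $p\ge 1$, uniformly in $n$. Combining this with \eqref{est_l1b} through Lyapunov's inequality gives, for any $\gamma\in(0,1)$ and $p>1$,
\begin{equation*}
\e\bigl[\sup_{0\le s\le T}|X_s - X_s^{(n)}|\bigr]\le \e\bigl[\sup|X-X^{(n)}|^\gamma\bigr]^{(p-1)/(p-\gamma)}\,\e\bigl[\sup|X-X^{(n)}|^p\bigr]^{(1-\gamma)/(p-\gamma)},
\end{equation*}
producing the exponent $\gamma(p-1)/(p-\gamma)$ on $\epsilon_n$ via \eqref{est_l1b}.

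For $\alpha=0$, the target exponent on $\epsilon_n$ is $1/2$: taking $p=2$ and $\gamma=2/3$ gives $\gamma(p-1)/(p-\gamma)=(2/3)/(4/3)=1/2$, while the Theorem \ref{Main_1} constant $(2-\gamma)/(1-\gamma)=4$ is finite, producing the bound $C\epsilon_n^{1/2}=C/\sqrt{\log n}$. For $\alpha\in(0,1/2)$, the target exponent is $2\alpha$: setting $p=2$ and $\gamma=4\alpha/(1+2\alpha)\in(0,1)$ gives exponent $2\alpha$ and constant $(2-\gamma)/(1-\gamma)=2/(1-2\alpha)$, which is finite whenever $\alpha<1/2$. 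Since $2\alpha(\beta/2\wedge\alpha)=\alpha(\beta\wedge 2\alpha)$, this produces $\e[\sup|X-X^{(n)}|]\le C\epsilon_n^{2\alpha}=Cn^{-\alpha(\beta\wedge 2\alpha)}$ as required.

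The main obstacle is the Lipschitz endpoint $\alpha=1/2$, where the prescription $\gamma=4\alpha/(1+2\alpha)$ formally yields $\gamma=1$ and the constant $(2-\gamma)/(1-\gamma)$ blows up. For this case, the Lipschitz continuity of $\sigma$ allows a direct Burkholder--Davis--Gundy plus Gr\"onwall argument on $\e[\sup_{s\le t}|X_s - X_s^{(n)}|^2]$, combined with the drift-error estimate already established in the proof of Theorem \ref{Main_1}, yielding the rate $\epsilon_n=n^{-\beta/2}$ directly and matching the claimed target.
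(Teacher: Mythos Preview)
Your interpolation argument for $\alpha\in[0,1/2)$ is correct and is a genuinely different route from the paper's. The paper does not black-box \eqref{est_l1b}; instead it returns to the pathwise decomposition \eqref{esti_X3_1}--\eqref{esti_X3_2}, bounds $\e[\sup_s|M_s^{n,\delta,\varepsilon}|]$ via the $L^1$-BDG inequality, and then uses the Young-inequality absorption trick
\[
\Big(\int_0^T|X_s-X_s^{(n)}|^{1+2\alpha}ds\Big)^{1/2}\le \big(V_T^{(n)}\big)^{1/2}\Big(\int_0^T|X_s-X_s^{(n)}|^{2\alpha}ds\Big)^{1/2}\le \tfrac{1}{4c}V_T^{(n)}+c\int_0^T|X_s-X_s^{(n)}|^{2\alpha}ds
\]
to absorb a fraction of $\e[V_T^{(n)}]$ and feed the $L^1$ estimate \eqref{est_l1} into the remaining time integral. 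Your interpolation is shorter and more elementary; the paper's argument has the advantage of treating all $\alpha\in(0,1/2]$ in one stroke and of tracking the constants needed later for Theorem \ref{Main_4b}.

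There is, however, a real gap at the endpoint $\alpha=1/2$. Your proposed fix is an $L^2$-sup BDG + Gr\"onwall argument together with ``the drift-error estimate from Theorem \ref{Main_1}''. But in $L^2$ the drift term (after removal of drift) contributes
\[
\e\Big[\Big(\int_0^T\big|b_A(X_s^{(n)})-b_A(X_{\eta_n(s)}^{(n)})\big|\,ds\Big)^2\Big]\le T\int_0^T\e\big[|b_A(X_s^{(n)})-b_A(X_{\eta_n(s)}^{(n)})|^2\big]ds\le \frac{C}{\sqrt n}
\]
by Lemma \ref{Lem} with $q=2$, and this bound is essentially sharp (see the remark after Lemma \ref{Lem}). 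Hence the $L^2$-sup error is only $O(n^{-1/2\wedge\beta})$, and after taking a square root you obtain $\e[V_T^{(n)}]\le Cn^{-(1/4\wedge\beta/2)}$, which falls short of the target $n^{-\beta/2}$ whenever $\beta>1/2$. The paper avoids this loss by never passing through $L^2$: the $b_A$ term stays in $L^1$ (where Lemma \ref{Lem} already gives $n^{-1/2}$), and the martingale sup is handled by $L^1$-BDG plus the absorption trick above, so that only the $L^1$ bound $\e[|X_s-X_s^{(n)}|]\le Cn^{-\beta/2}$ from Theorem \ref{Main_1} is needed inside the time integral.
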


\begin{Thm} \label{Main_4b}
Under the assumption of Theorem \ref{Main_4}, 
there exists a constant $C$ which depends on $K_\sigma, \|b_A\|_{\mathcal{A}}, \|b_H\|_\beta, T, x_0, \alpha$ and $\beta$ such that
	\begin{align*}
	\e[\sup_{0 \leq s \leq T} |X_s-X_s^{(n)}|] 
	&\leq \left\{ \begin{array}{ll}
	\displaystyle \frac{Ce^{C\sqrt{\log (\log n)}}}{\sqrt{\log n}} &\text{ if } \alpha = 0,  \\
	\displaystyle \frac{Ce^{C\sqrt{\log n}}}{n^{\alpha ( \beta \wedge 2\alpha) }} &\text{ if } \alpha \in (0, 1/2].
	\end{array}\right.
	\end{align*}
\end{Thm}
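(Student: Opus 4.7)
The plan is to derive Theorem \ref{Main_4b} from Theorem \ref{Main_2} via a truncation--localization argument, parallel to the passage from Theorem \ref{Main_1} to Theorem \ref{Main_4}. Since Theorem \ref{Main_2} is available as soon as $b\in L^1(\real)$, the idea is to truncate $b$ to a compact window $[-R,R]$, apply Theorem \ref{Main_2} to the truncated SDE (paying a constant that grows with $R$), and absorb the excess error produced when either $X$ or $X^{(n)}$ leaves that window by sub-Gaussian tail bounds. An optimal choice $R=R(n)$ then produces the claimed $e^{C\sqrt{\log n}}$ prefactor.

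Concretely, I would fix a cutoff $\varphi_R\in C^\infty(\real)$ with $\varphi_R\equiv 1$ on $[-R,R]$, $\varphi_R\equiv 0$ outside $[-R{-}1,R{+}1]$ and $|\varphi_R'|\leq 2$, and set $b^R:=b\,\varphi_R=b_A\varphi_R+b_H\varphi_R$. Writing $\varphi_R$ as the difference of two bounded monotone functions, Proposition \ref{exp_1} gives $\varphi_R\in\mathcal{A}$ and hence $b_A\varphi_R\in\mathcal{A}$ with $\|\cdot\|_\mathcal{A}$ bounded independently of $R$; likewise $b_H\varphi_R\in H^\beta$ with norm uniformly bounded, while $\|b^R\|_{L^1(\real)}\leq 2(R+1)\|b\|_\infty$. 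Denoting by $X^R$ and $X^{R,(n)}$ the SDE and its Euler--Maruyama scheme driven by $b^R$, Theorem \ref{Main_2} yields
\begin{align*}
\e\!\left[\sup_{0\leq s\leq T}\bigl|X_s^R-X_s^{R,(n)}\bigr|\right]\leq C(R)\,r(n),
\end{align*}
where $r(n)$ denotes the rate appearing in Theorem \ref{Main_2} and $C(R)$ absorbs the dependence on $\|b^R\|_{L^1(\real)}$, which one expects to grow at most like $e^{CR}$ when one tracks the Zvonkin-type transformation inside the proof.

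Next I would glue on the event $A_R:=\{\sup_{s\leq T}|X_s|\leq R\}\cap\{\sup_{s\leq T}|X_s^{(n)}|\leq R\}$: pathwise uniqueness gives $X=X^R$ and $X^{(n)}=X^{R,(n)}$ on $A_R$, so that
\begin{align*}
\e\!\left[\sup_{s\leq T}\bigl|X_s-X_s^{(n)}\bigr|\right]\leq C(R)\,r(n)+\e\!\left[\sup_{s\leq T}\bigl|X_s-X_s^{(n)}\bigr|\,\1_{A_R^c}\right].
\end{align*}
The tail term is controlled via Cauchy--Schwarz together with a uniform $L^2$ bound on $\sup_s|X_s-X_s^{(n)}|$ (available because $b$ and $\sigma$ are bounded) and with the Gaussian estimate $\p(A_R^c)\leq C\exp(-cR^2)$, which follows from the boundedness of the coefficients by exponential martingale inequalities applied to both $X$ and $X^{(n)}$.

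Choosing $R=A\sqrt{\log n}$ with $A$ sufficiently large renders $\exp(-cR^2/2)=n^{-cA^2/2}$ negligible compared to $r(n)$, while $C(R)$ contributes the desired $e^{C\sqrt{\log n}}$ factor; the case $\alpha=0$ is analogous with $R\asymp\sqrt{\log\log n}$. The main obstacle I anticipate is pinning down the precise dependence of the constant in Theorem \ref{Main_2} on $\|b\|_{L^1(\real)}$ and verifying that after truncation it grows at worst exponentially in $R$, which amounts to a careful bookkeeping of the Zvonkin transformation and the Yamada--Watanabe estimates used inside that proof. A secondary but routine point is checking that $\varphi_R$ and $b_A\varphi_R$ lie in $\mathcal{A}$ with $\mathcal{A}$-norms uniform in $R$, which follows directly from total-variation bounds on $\varphi_R$.
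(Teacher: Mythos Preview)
Your proposal is correct and follows essentially the same route as the paper: truncate $b$ by a smooth cutoff on $[-m,m]$, apply Theorem~\ref{Main_2} to the truncated SDE, use sub-Gaussian tail bounds (via Cauchy--Schwarz and an exponential martingale inequality) to handle the event where $X$ or $X^{(n)}$ exits the window, and then optimize over $m\asymp\sqrt{\log n}$ (resp.\ $\sqrt{\log\log n}$). The paper confirms the point you flag as the ``main obstacle'': tracking the proof of Theorem~\ref{Main_2} shows the $\|b\|_{L^1}$-dependence enters only through powers of $C_0=e^{2K_\sigma^2\|b\|_{L^1(\real)}}$, so the constant for the truncated drift is indeed of order $e^{C m}$.
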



The following $L^p$-norm estimation is useful to construct a Multi-level Monte Carlo simulation for $X$ (see \cite{G}).

\begin{Thm} \label{Main_3}
	Let Assumption \ref{Ass_1} hold.
	For any $p \geq 2$, then there exists a constant $C$ which may depend on $K_\sigma, \|b_A\|_{\mathcal{A}}, \|b_H\|_\beta, \|b\|_{L^1(\real)}, T, x_0, \alpha, \beta$ and $p$ such that
	\begin{align*}
	\e[\sup_{0 \leq s \leq T} |X_s-X_s^{(n)}|^p] 
	&\leq \left\{ \begin{array}{ll}
	\displaystyle \frac{C}{\log n} &\text{ if } \alpha = 0,  \\
	\displaystyle \frac{C}{n^{ \frac{\beta}{2} \wedge \alpha }} &\text{ if } \alpha \in (0, 1/2), \\
	\displaystyle \frac{C}{n^{\frac12 \wedge \frac{p\beta}{2} }} &\text{ if } \alpha =1/2.
	\end{array}\right.
	\end{align*}
\end{Thm}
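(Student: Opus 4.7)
The plan is to mimic the removal-drift approach of Theorems \ref{Main_1}--\ref{Main_2}, exploiting the fact that for $p \geq 2$ the map $x \mapsto |x|^p$ is of class $C^2$, so It\^o's formula applies directly to $|U(X_t) - U(X^{(n)}_t)|^p$ without a Yamada--Watanabe regularization of the absolute value. Yamada--Watanabe re-enters only implicitly through the $(1+2\alpha)$-exponent coming from the H\"older continuity of $\sigma$ in the quadratic variation.

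First, I would introduce a Zvonkin-type transform $U \in C^2(\real)$ with $U'(x) = \exp\bigl(-2\int_0^x b_A(y)/\sigma^2(y)\,dy\bigr)$, so that $\tfrac12 \sigma^2 U'' + b_A U' = 0$. Since $b_A \in \mathcal{A} \cap L^1(\real)$ and $\sigma$ is bounded and uniformly elliptic, $U$ is a bi-Lipschitz $C^2$-diffeomorphism with Lipschitz $U'$. Setting $Z_t := U(X_t) - U(X^{(n)}_t)$, It\^o's formula gives $Z_t = \int_0^t D_s\,ds + \int_0^t \Sigma_s\,dW_s$, where $D_s$ gathers the H\"older-drift difference $U'(X_s) b_H(X_s) - U'(X^{(n)}_s) b_H(X^{(n)}_{\eta_n(s)})$ and the $b_A$-residual $U'(X^{(n)}_s)[b_A(X^{(n)}_s)\sigma^2(X^{(n)}_{\eta_n(s)})/\sigma^2(X^{(n)}_s) - b_A(X^{(n)}_{\eta_n(s)})]$, and where $|\Sigma_s|^2 \leq C|Z_s|^{1+2\alpha} + C|X^{(n)}_s - X^{(n)}_{\eta_n(s)}|^{1+2\alpha}$ by the bi-Lipschitz property of $U$ and the H\"older continuity of $\sigma$.

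Next, I would apply It\^o's formula to $|Z_t|^p$, take $\sup_{s\leq t}$ and expectation, and apply BDG to the martingale part. An AM--GM absorption of the ensuing $\sup|Z|^p$ factor gives
\begin{align*}
\e\bigl[\sup_{s \leq t} |Z_s|^p\bigr] \leq C \int_0^t \e\bigl[|Z_s|^{p-1} |D_s|\bigr]\,ds + C \int_0^t \e\bigl[|Z_s|^{p-2} |\Sigma_s|^2\bigr]\,ds.
\end{align*}
The diffusion contribution is treated by substituting the bound on $|\Sigma_s|^2$, using Young's inequality on the resulting $|Z|^{p-1+2\alpha}$ factor, and applying the one-step moment bound $\e[|X^{(n)}_s - X^{(n)}_{\eta_n(s)}|^q] \leq C n^{-q/2}$. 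The H\"older drift $b_H$ adds an $n^{-p\beta/2}$ contribution via $\beta$-H\"older regularity and the same one-step estimate.

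The main obstacle is controlling the $b_A$-residual in the $L^p$-weighted form. Splitting it as $[b_A(X^{(n)}_s) - b_A(X^{(n)}_{\eta_n(s)})] + b_A(X^{(n)}_s)[\sigma^2(X^{(n)}_{\eta_n(s)})/\sigma^2(X^{(n)}_s) - 1]$, the second piece is bounded trivially by $|X^{(n)}_s - X^{(n)}_{\eta_n(s)}|^{1+2\alpha}$. For the first piece, I would approximate $b_A$ by the $C^1$ sequence $(b_{A,N})_{N\in\n}$ from the definition of $\mathcal{A}$ and apply condition $\mathcal{A}(iii)$ against the Gaussian conditional density of $X^{(n)}_s$ given $\mathcal{F}_{\eta_n(s)}$, reproducing the argument behind Theorems \ref{Main_1}--\ref{Main_2}; the weight $|Z_s|^{p-1}$ is first separated out via Cauchy--Schwarz. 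Finally I would close the estimate by a Gronwall argument: nonlinear (Bihari-type) for $\alpha < 1/2$, logarithmic for $\alpha = 0$, and classical linear for $\alpha = 1/2$. This produces $1/\log n$ when $\alpha = 0$, the rate $n^{-(\beta/2 \wedge \alpha)}$ for $\alpha \in (0, 1/2)$ (where the sub-linear exponent $p-1+2\alpha < p$ forces the Bihari step), and the sharper $n^{-(1/2 \wedge p\beta/2)}$ when $\alpha = 1/2$ and $\Sigma$ becomes Lipschitz. The delicate point throughout is ensuring that the class-$\mathcal{A}$ control in this weighted $L^p$ form does not deteriorate with $p$, which is absorbed into the $p$-dependence of the constant $C$.
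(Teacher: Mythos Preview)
Your proposal has a genuine gap in the closing step for $\alpha<1/2$. After applying It\^o's formula to $|Z_t|^p$ and BDG, the second-order and martingale contributions produce a term of the form $\int_0^t \e[|Z_s|^{p-1+2\alpha}]\,ds$. Since $p-1+2\alpha<p$, a Bihari-type inequality $V_t\le C\int_0^t V_s^{\gamma}\,ds+\xi_n$ with $\gamma=(p-1+2\alpha)/p<1$ yields $V_T\le(\xi_n^{1-\gamma}+C(1-\gamma)T)^{1/(1-\gamma)}$, which is $O(1)$ and does \emph{not} tend to zero as $n\to\infty$. The same difficulty arises for $\alpha=0$: there is no ``logarithmic Gronwall'' mechanism in your setup that would manufacture the rate $1/\log n$; in the paper that rate comes entirely from the Yamada--Watanabe choice $\varepsilon=1/\log n$, $\delta=n^{1/3}$, which you have explicitly discarded.

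The paper's proof keeps Yamada--Watanabe at the core. It reuses the pointwise bounds \eqref{esti_X3_1} and \eqref{esti_X3_2} on $|X_\tau-X_\tau^{(n)}|$, in which the H\"older exponent of $\sigma$ has already been converted via $\phi''_{\delta,\varepsilon}$ into the harmless constant $\varepsilon^{2\alpha}/\log\delta$, and then raises these bounds to the $p$-th power. The only surviving term with a bad exponent is the martingale $M^{n,\delta,\varepsilon}$; after BDG it yields $\e\bigl[(\int_0^t|X_s-X_s^{(n)}|^{1+2\alpha}ds)^{p/2}\bigr]$, and this is handled not by Bihari but by Gy\"ongy's Gronwall-type Lemma~\ref{Lem2_1} with $r=p$, $q=2$, $\rho=1+2\alpha$, which reduces the problem to $\int_0^T\e[|X_s-X_s^{(n)}|]\,ds$, i.e.\ to the $L^1$ estimate of Theorem~\ref{Main_1}. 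Your sketch never invokes Theorem~\ref{Main_1}, and without that bootstrap the sublinear exponent cannot be closed.

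A secondary issue: basing the transform on $b_A$ alone requires $b_A\in L^1(\real)$, which is not assumed (only $b\in L^1(\real)$ is), and it leaves a residual drift $U'(X_s)b_H(X_s)-U'(X_s^{(n)})b_H(X_s^{(n)})$ of order $|Z_s|^\beta$, which is again sublinear and suffers the same closure problem. The paper's transform $\varphi$, built from the full $b$, eliminates the drift of $Y$ entirely, so that $I_t^{(n)}$ involves only one-step increments $X_s^{(n)}-X_{\eta_n(s)}^{(n)}$ and is controlled directly by Lemmas~\ref{Lem} and~\ref{Lem_1}.
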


If we suppose that $\sigma$ is Lipschitz continuous and $b \in H^\beta$, i.e. $b_{\mathcal{A}} \equiv 0$, then Theorem \ref{Main_3} implies the following result which improves the one in \cite{MeTa} for SDEs with non-constant diffusion.
\begin{Cor}\label{Main_0}
	Assume that $b \in L^1(\real) \bigcap H^{\beta}$ for some $\beta \in (0,1]$ and  the diffusion coefficient $\sigma$ is Lipschitz continuous and  uniformly elliptic.
	Then for any $p \geq 1$, there exists positive constant $C$ which depends on $K_\sigma, \|b\|_\beta, \|b\|_{L^1(\real)}, T, x_0, \alpha, \beta$ and $p$ such that
	\begin{align*}
		\e[\sup_{0 \leq s \leq T} |X_s - X_s^{(n)}|^{p}]
		\leq \frac{C}{n^{p \beta/2}}.
	\end{align*}
\end{Cor}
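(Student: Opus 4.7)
The plan is to derive Corollary \ref{Main_0} as a specialization of Theorem \ref{Main_3} to the case where the bounded-variation part $b_A$ of the drift is absent and $\sigma$ is Lipschitz, then to extend to the sub-quadratic range of $p$ by Jensen's inequality.

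First, I would verify that the corollary's hypotheses fit Assumption \ref{Ass_1}. Taking $b_A \equiv 0 \in \mathcal{A}$ (trivially, with $\|b_A\|_{\mathcal{A}} = 0$) and $b_H := b \in H^\beta \cap L^1(\real)$ yields the decomposition $b = b_A + b_H \in L^1(\real)$ required by Assumption \ref{Ass_1}(i). Since $\sigma$ is Lipschitz and uniformly elliptic, Assumption \ref{Ass_1}(ii) holds with $\alpha = \tfrac{1}{2}$.

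Second, for $p \geq 2$, I would invoke the $\alpha = \tfrac{1}{2}$ branch of Theorem \ref{Main_3} to obtain
\[
\e\bigl[\sup_{0 \leq s \leq T}|X_s - X_s^{(n)}|^p\bigr] \leq \frac{C}{n^{(1/2) \wedge (p\beta/2)}}.
\]
When $p\beta \leq 1$ this is already $C/n^{p\beta/2}$ and there is nothing more to do. In the regime $p\beta > 1$, the $\tfrac{1}{2}$ appearing in the minimum is a relic of the bounded-variation component $b_A$ in the general argument; with $b_A \equiv 0$ the drift discrepancy in the Yamada--Watanabe estimate reduces to $\e[|b(X_s) - b(X_{\eta_n(s)}^{(n)})|^p] \leq C\,\e[|X_s - X_{\eta_n(s)}^{(n)}|^{p\beta}]$, which decays as $n^{-p\beta/2}$ by the standard continuity estimate for the Euler scheme together with the boundedness of $\sigma$. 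A Grönwall-type closure then recovers the exponent $p\beta/2$ uniformly in $p \geq 2$.

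Third, for $p \in [1,2)$ I would apply Jensen's inequality to reduce to $p = 2$:
$\e[\sup_s|X_s - X_s^{(n)}|^p] \leq (\e[\sup_s|X_s - X_s^{(n)}|^2])^{p/2}$,
and the $p=2$ case of the previous step yields a bound of order $n^{-p\beta/2}$, as required.

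I expect the main obstacle to lie in the second step's regime $p\beta > 1$: one must carefully trace the Yamada--Watanabe/BDG computation underlying Theorem \ref{Main_3} and verify that, once $b_A$ is switched off, no step in the argument forces the $\tfrac{1}{2}$ ceiling, so that the pure Hölder rate $p\beta/2$ survives for all $p \geq 2$. The remaining ingredients (moment bounds on the Euler scheme, continuity estimate, and boundedness of $\sigma$) are standard and do not introduce additional caps.
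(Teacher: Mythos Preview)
Your plan is correct: tracing through \eqref{esti_X_sup_p_1}--\eqref{esti_X_sup_p_2} with $b_A \equiv 0$ and $\alpha = 1/2$, the only term that produces the $n^{-1/2}$ ceiling is $\int_0^T \e\big[|b_A(X_s^{(n)}) - b_A(X_{\eta_n(s)}^{(n)})|^p\big]\,ds$, handled via Lemma \ref{Lem}; with it absent, every remaining contribution is $O(n^{-p\beta/2})$ or better, and the Gronwall step leading to \eqref{finalK1} closes the argument. Your Jensen reduction from $p\in[1,2)$ to $p=2$ is also what the paper uses. However, the paper takes a more direct route that bypasses the Yamada--Watanabe approximation entirely: since $\sigma$ is Lipschitz there is no need for $\phi_{\delta,\varepsilon}$, and one works straight from $|X_t - X_t^{(n)}|^p \leq C_0^p |Y_t - Y_t^{(n)}|^p$ via \eqref{PDE_4}, splits $Y_t - Y_t^{(n)}$ into a stochastic integral and a drift remainder, bounds the martingale part by Burkholder--Davis--Gundy together with the Lipschitz continuity of both $\varphi'$ and $\sigma$, bounds the drift part using $\varphi'' = -2b\varphi'/\sigma^2$ and $b \in H^\beta$, and finishes with an ordinary Gronwall inequality. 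This avoids the parameters $\varepsilon,\delta$ and the auxiliary estimates \eqref{esti_J1}--\eqref{esti_J3} altogether. Your approach has the virtue of reusing the general machinery already built for Theorem \ref{Main_3}, at the cost of carrying the Yamada--Watanabe scaffolding through an argument that does not require it; the paper's approach exploits the Lipschitz hypothesis on $\sigma$ from the outset and is correspondingly shorter and self-contained.
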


\begin{Rem} \label{compare}
\begin{enumerate}
\item Gy\"ongy \cite{G98}  studied the rate of convergence in the almost sure sense of the Euler-Maruyama approximation for SDEs with irregular drift. He showed that the rate is $1/4$ when $\sigma$ is locally Lipschitz and $b$ satisfies an one-sided Lipschitz type condition.
\item In the case that $\beta = 1$, the results of Theorems \ref{Main_1}, \ref{Main_2} and \ref{Main_3} were proven in \cite{NT} under a further assumption that $b$ is one-sided Lipschitz.
In this paper, thanks to the method of removal drift we are able to get rid of this assumption.
Note that if $\sigma$ is Lipschitz function, the strong rate of the Euler-Maruyama approximation mentioned in Theorem \ref{Main_1} is $1/2$.
\end{enumerate}
\end{Rem}

\section{Proof of the main Theorems} \label{sec:proofs}

\subsection{Some auxiliary estimates}

The following lemma is a key estimation for proving the main theorems.
The proof is quite similar to the one of Lemma 3.5 in \cite{NT} and will be omitted.

\begin{Lem}\label{Lem}
	Assume that $b$ is bounded, measurable and $\sigma$ satisfies Assumption  \ref{Ass_1} (ii).
	Suppose that $\zeta \in \mathcal{A}$.
	Then for any $q \geq 1$, there exists $C \equiv C(T,\ksm, \|\zeta\|_{\mathcal{A}},\|b\|_{\infty},x_0,q)$ such that
	\begin{align}\label{bxsbxeta}
	\int_0^T\e[| \zeta(X_s^{(n)}) - \zeta(X_{\eta_n(s)}^{(n)})|^q]ds \leq \frac{C}{\sqrt{n}}.
	\end{align}
\end{Lem}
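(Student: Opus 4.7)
The plan is to reduce to the case $q=1$ by the trivial bound $|\zeta(x)-\zeta(y)|^q \leq (2\|\zeta\|_\mathcal{A})^{q-1}|\zeta(x)-\zeta(y)|$ (using $\|\zeta\|_\infty \leq \|\zeta\|_\mathcal{A}$), then approximate $\zeta$ by the sequence $(\zeta_N) \subset C^1(\real)$ from the definition of class $\mathcal{A}$, establish the bound for $\zeta_N$ uniformly in $N$, and pass to the limit using $\mathcal{A}(i)$ combined with $\mathcal{A}(ii)$ and dominated convergence.

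For $\zeta_N \in C^1(\real)$, the fundamental theorem of calculus and Fubini yield
\begin{equation*}
\e\bigl[|\zeta_N(X_s^{(n)}) - \zeta_N(X_{\eta_n(s)}^{(n)})|\bigr] \leq \int_\real |\zeta_N'(y)|\, P_n(s,y)\, dy,
\end{equation*}
where $P_n(s,y) := \p\bigl(y \in [X_{\eta_n(s)}^{(n)} \wedge X_s^{(n)},\, X_{\eta_n(s)}^{(n)} \vee X_s^{(n)}]\bigr)$. The next step is to bound $P_n(s,y)$ by using that, conditional on $\mathcal{F}_{\eta_n(s)}$, the increment $X_s^{(n)} - X_{\eta_n(s)}^{(n)}$ is exactly Gaussian with variance $\sigma^2(X_{\eta_n(s)}^{(n)})(s-\eta_n(s)) \asymp s - \eta_n(s)$, and then integrating over $X_{\eta_n(s)}^{(n)}$ against a uniform heat-kernel-type density bound of the form $p^{(n)}_{\eta_n(s)}(a) \leq C\,\eta_n(s)^{-1/2} \exp(-(a-x_0)^2/(C\eta_n(s)))$. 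The latter is obtained by a Girsanov change of measure removing the bounded drift $b/\sigma$ and controlling the density of the resulting time-changed Brownian motion. This produces, for $\eta_n(s)>0$, the pointwise bound
\begin{equation*}
P_n(s,y) \leq C \sqrt{\tfrac{\Delta_s}{\eta_n(s)}}\; \exp\!\left(-\tfrac{(y-x_0)^2}{C\eta_n(s)}\right), \qquad \Delta_s := s - \eta_n(s) \leq T/n.
\end{equation*}

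Applying condition $\mathcal{A}(iii)$ with $u = C\eta_n(s)$ now bounds $\int_\real |\zeta_N'(y)| \exp(-(y-x_0)^2/(C\eta_n(s))) dy \leq (1+C\sqrt{\eta_n(s)}) K_\mathcal{A}$, giving a pointwise-in-$s$ estimate of order $\sqrt{\Delta_s/\eta_n(s)} + \sqrt{\Delta_s}$. Integrating over $s \in (T/n,T]$ and using $\int_{T/n}^T \eta_n(s)^{-1/2} ds \leq 2\sqrt{T}$ yields a contribution of order $\sqrt{T/n} = O(n^{-1/2})$. On the remaining interval $s \in [0,T/n]$, where $X_{\eta_n(s)}^{(n)} \equiv x_0$ is deterministic, the density argument is replaced by direct conditioning: applying $\mathcal{A}(iii)$ to the Gaussian distribution of $X_s^{(n)}$ around $x_0$ gives a uniformly bounded conditional expectation, whose total contribution is $O(T/n)$.

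The main obstacle is exactly the uniform-in-$n$ density bound for $X_{\eta_n(s)}^{(n)}$. It is what converts the a priori $(1+\sqrt{u})K_\mathcal{A}$ bound in $\mathcal{A}(iii)$—whose constant term does not decay with $\Delta_s$—into the required $\sqrt{\Delta_s}$ rate: without integrating over the density, the conditional expectation given $X_{\eta_n(s)}^{(n)}=a$ can itself be of order one when $a$ sits near a jump of $\zeta$, and it is the density bound that localises this bad region to a set of Lebesgue measure $O(\sqrt{\Delta_s})$. This is precisely the mechanism used in Lemma 3.5 of \cite{NT}, from which the present lemma is adapted.
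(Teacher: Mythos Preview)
The paper omits the proof entirely, deferring to Lemma~3.5 of \cite{NT}, and your proposal is precisely a correct sketch of that argument: the reduction to $q=1$, the $C^1$ approximation, the Gaussian-type density bound for $X_{\eta_n(s)}^{(n)}$ (via Girsanov), the pointwise estimate on $P_n(s,y)$, and the application of condition $\mathcal{A}(iii)$ followed by integration in $s$ are exactly the steps used there. Your final paragraph correctly identifies the crucial mechanism---that the density of $X_{\eta_n(s)}^{(n)}$ is what converts the $O(1)$ conditional bound into an integrated $O(\sqrt{\Delta_s})$ bound---so nothing is missing.
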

	
\begin{Rem}
	Note that the estimate \eqref{bxsbxeta} is tight (see, Remark 3.6 in \cite{NT}).
\end{Rem}
	
The following estimation is standard (see Remark 1.2 in \cite{Gyongy}).
\begin{Lem} \label{Lem_1}
	Suppose that $b$ and $\sigma$ are bounded, measurable.
	Then for any $q>0$, there exist $C \equiv C(q,\|b\|_{\infty}, \|\sigma\|_{\infty}, T) $ such that
	\begin{align*}
	\sup_{t \in [0,T]} \e[|X_t^{(n)}-X_{\eta_n(t)}^{(n)}|^q]\leq \frac{C}{n^{q/2}}.
	\end{align*}
\end{Lem}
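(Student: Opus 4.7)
The plan is to exploit the fact that both $b$ and $\sigma$ are bounded so that the one-step increment
\[
X_t^{(n)}-X_{\eta_n(t)}^{(n)}=\int_{\eta_n(t)}^{t} b\bigl(X_{\eta_n(s)}^{(n)}\bigr)\,ds+\int_{\eta_n(t)}^{t}\sigma\bigl(X_{\eta_n(s)}^{(n)}\bigr)\,dW_s
\]
is the sum of a drift term of size at most $\|b\|_\infty(t-\eta_n(t))\le \|b\|_\infty T/n$ and a martingale increment controlled by the BDG inequality.

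First I would treat the case $q\ge 2$. Using $|a+b|^q\le 2^{q-1}(|a|^q+|b|^q)$, the drift part contributes at most $\|b\|_\infty^q(T/n)^q$, which is smaller than $n^{-q/2}$ up to a constant. For the stochastic integral part, the Burkholder--Davis--Gundy inequality gives a constant $C_q$ with
\[
\e\Bigl[\Bigl|\int_{\eta_n(t)}^{t}\sigma(X_{\eta_n(s)}^{(n)})\,dW_s\Bigr|^{q}\Bigr]\le C_q\,\e\Bigl[\Bigl(\int_{\eta_n(t)}^{t}\sigma^2(X_{\eta_n(s)}^{(n)})\,ds\Bigr)^{q/2}\Bigr]\le C_q\|\sigma\|_\infty^{q}\Bigl(\frac{T}{n}\Bigr)^{q/2},
\]
since $t-\eta_n(t)\le T/n$. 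Combining these two estimates and taking the supremum over $t\in[0,T]$ yields the desired $C/n^{q/2}$ bound with a constant depending only on $q,\|b\|_\infty,\|\sigma\|_\infty,T$.

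To extend to $0<q<2$, I would use Jensen/Lyapunov's inequality: for any $q\in(0,2)$,
\[
\e[|X_t^{(n)}-X_{\eta_n(t)}^{(n)}|^q]\le \bigl(\e[|X_t^{(n)}-X_{\eta_n(t)}^{(n)}|^{2}]\bigr)^{q/2}\le \Bigl(\frac{C_2}{n}\Bigr)^{q/2}=\frac{C_2^{q/2}}{n^{q/2}},
\]
which delivers the same rate with a constant inherited from the $q=2$ case.

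There is essentially no obstacle here; the only minor point to notice is the dependence of the BDG constant on $q$, which is harmless because $q$ is allowed to enter the final constant $C$. The boundedness of $b$ and $\sigma$ ensures the expectations appearing above are finite without any moment estimate on $X^{(n)}$ itself, which is exactly why the argument is so short.
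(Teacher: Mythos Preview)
Your argument is correct and is precisely the standard verification one expects. The paper does not actually prove this lemma; it simply declares the estimate standard and refers to Remark~1.2 in \cite{Gyongy}, so your BDG-plus-Jensen route is entirely in line with what the authors have in mind (one could even shortcut the BDG step by noting that on $[\eta_n(t),t]$ the integrand $\sigma(X^{(n)}_{\eta_n(s)})=\sigma(X^{(n)}_{\eta_n(t)})$ is constant, reducing the stochastic term to a bounded multiple of a Gaussian increment).
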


\subsection{The method of removal of  drift}
The following removal of drift transformation plays a crucial role in our argument. 
Under the assumption that  $b \in L^1(\real)$  and $\sigma^2$ is uniformly elliptic, the following functions 
\begin{align*}
f(x) : = -2 \int_0^x \frac{b(y)}{\sigma^2(y)} dy \text{ and } \varphi (x) := \int_0^x \exp(f(y)) dy.
\end{align*}
are well-defined. Moreover, since $\varphi''=-\frac{2 b \varphi'}{\sigma^2}$, $\varphi$ satisfies the following PDE
\begin{align*}
b(x)\varphi'(x) + \frac{1}{2} \sigma^2(x) \varphi''(x) = 0.
\end{align*}
Define $Y_t:=\varphi(X_t)$ and $Y_t^{(n)}:=\varphi(X_t^{(n)})$.
Then by It\^o's formula we have
\begin{align*}
	Y_t
	= \varphi(x_0) + \int_0^t \varphi'(X_s) \sigma(X_s)dW_s
\end{align*}
and
\begin{align*}
	Y_t^{(n)}
	= \varphi(x_0)
	+\int_0^t \left( \varphi'(X_s^{(n)}) b(X_{\eta_n(s)}^{(n)})+\frac{1}{2}\varphi''(X_s^{(n)}) \sigma^2(X_{\eta_n(s)}^{(n)}) \right) ds
	+ \int_0^t \varphi'(X_s^{(n)}) \sigma(X_{\eta_n(s)}^{(n)})dW_s.
\end{align*}

We will make repeated use of the following elementary lemma.
\begin{Lem}\label{PDE_2}
	Suppose that $b \in L^1(\real)$ and Assumption \ref{Ass_1} (ii) hold.
	Let $C_0 = e^{2\ksm^2 \|b\|_{L^1(\real)}}$.
	\begin{itemize} \item[(i)] For any $x \in \real$,
	\begin{align*}
	\frac{1}{C_0}	\leq \varphi'(x)=\exp(f(x)) \leq C_0.
	\end{align*}
	\item[(ii)] For any $x \in \real$, 
	$$|\varphi''(x)| \leq 2\ksm^2 \|b\|_{\infty} \|\varphi'\|_{\infty} \leq 2\|b\|_\infty \ksm^2 C_0.$$
	\item[(iii)] For any $z,w \in Dom(\varphi^{-1})$,
	\begin{align}\label{PDE_4}
	|\varphi^{-1}(z)-\varphi^{-1}(w)|
	\leq C_0 |z-w|.
	\end{align}
	\end{itemize}
\end{Lem}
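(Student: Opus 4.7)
The plan is to establish each of the three parts by a short direct computation from the definitions; since everything here is elementary calculus together with the uniform ellipticity bound, I do not anticipate any serious obstacle beyond careful bookkeeping of the constants.

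For part~(i), I would start from the definition $f(x)=-2\int_0^x b(y)/\sigma^2(y)\,dy$ and use the lower ellipticity bound $\sigma^2(y)\geq 1/\ksm^2$, i.e.\ $1/\sigma^2(y)\leq \ksm^2$, together with $b\in L^1(\real)$ to obtain
\[
|f(x)|\;\leq\;2\ksm^2 \int_\real |b(y)|\,dy\;=\;2\ksm^2\|b\|_{L^1(\real)}.
\]
Exponentiating gives $\varphi'(x)=\exp(f(x))\in\bigl[e^{-2\ksm^2\|b\|_{L^1(\real)}},\,e^{2\ksm^2\|b\|_{L^1(\real)}}\bigr]=[1/C_0,\,C_0]$.

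For part~(ii), the identity $\varphi''=-2b\varphi'/\sigma^2$ has already been recorded just before the statement (it is the differentiated form of the PDE $b\varphi'+\tfrac12\sigma^2\varphi''=0$ satisfied by $\varphi$). Combining this with $1/\sigma^2\leq \ksm^2$, the bound $\|\varphi'\|_\infty\leq C_0$ obtained in (i), and the boundedness of $b$ yields
\[
|\varphi''(x)|\;\leq\;2\ksm^2\,|b(x)|\,\|\varphi'\|_\infty\;\leq\;2\ksm^2\|b\|_\infty C_0.
\]

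For part~(iii), the strictly positive lower bound $\varphi'\geq 1/C_0>0$ from (i) shows that $\varphi$ is a strictly increasing $C^1$ function on $\real$, hence a diffeomorphism onto its (open interval) range, and $\varphi^{-1}$ is $C^1$ on that range with $(\varphi^{-1})'(z)=1/\varphi'(\varphi^{-1}(z))\leq C_0$. The mean value theorem then gives $|\varphi^{-1}(z)-\varphi^{-1}(w)|\leq C_0|z-w|$ for all $z,w\in\mathrm{Dom}(\varphi^{-1})$, which is the desired estimate.
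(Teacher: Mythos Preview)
Your proof is correct and is exactly the routine computation the authors had in mind: the paper itself omits the proof, stating that ``the proof of Lemma~\ref{PDE_2} is trivial and therefore will be omitted.'' Your argument---bounding $|f|$ via $1/\sigma^2\le \ksm^2$ and $b\in L^1$, using the identity $\varphi''=-2b\varphi'/\sigma^2$, and applying the mean value theorem to the inverse---is precisely that trivial proof.
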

The proof of Lemma \ref{PDE_2} is trivial and therefore will be omitted.

\subsection{Yamada and Watanabe approximation technique}
To deal with the H\"older continuity of the diffusion coefficient $\sigma$, we use  Yamada and Watanabe approximation technique (see \cite{Yamada} or \cite{Gyongy}).
For each $\delta \in (1,\infty)$ and $\varepsilon \in (0,1)$, we define a continuous function $\psi _{\delta, \varepsilon}: \real \to \real^+$ with $supp\: \psi _{\delta, \varepsilon}  \subset [\varepsilon/\delta, \varepsilon]$ such that
\begin{align*} 
\int_{\varepsilon/\delta}^{\varepsilon} \psi _{\delta, \varepsilon}(z) dz
= 1 \text{ and } 0 \leq \psi _{\delta, \varepsilon}(z) \leq \frac{2}{z \log \delta}, \:\:\:z > 0.
\end{align*}
Since $\int_{\varepsilon/\delta}^{\varepsilon} \frac{2}{z \log \delta} dz=2$, there exists such a function $\psi_{\delta, \varepsilon}$.
We define a function $\phi_{\delta, \varepsilon} \in C^2(\real;\real)$ by
\begin{align*}
\phi_{\delta, \varepsilon}(x)&:=\int_0^{|x|}\int_0^y \psi _{\delta, 
	\varepsilon}(z)dzdy.
\end{align*}
It is easy to verify that $\phi_{\delta, \varepsilon}$ has the following useful properties: 
\begin{align} 
&|x| \leq \varepsilon + \phi_{\delta, \varepsilon}(x), \text{ for any $x \in \real $}, \label{phi3}\\ 
&0 \leq |\phi'_{\delta, \varepsilon}(x)| \leq 1, \text{ for any $x \in \real$} \label{phi2}, \\
\phi''_{\delta, \varepsilon}(\pm|x|)&=\psi_{\delta, \varepsilon}(|x|)
\leq \frac{2}{|x|\log \delta}{\bf 1}_{[\varepsilon/\delta, \varepsilon]}(|x|), 
\text{ for any $x \in \real \setminus\{0\}$}. \label{phi4}
\end{align}

From \eqref{PDE_4} and \eqref{phi3}, for any $t \in [0,T]$, we have
\begin{align}\label{esti_X1}
	|X_t-X_t^{(n)}|
	\leq C_0 |Y_t-Y_t^{(n)}|
	\leq C_0 \left( \varepsilon + \phi_{\delta,\varepsilon}(Y_t-Y_t^{(n)}) \right).
\end{align}

Using It\^o's formula, we have
\begin{align}\label{esti_X2}
	\phi_{\delta,\varepsilon}(Y_t-Y_t^{(n)})
	=M_t^{n,\delta,\varepsilon}
	+I_t^{(n)}
	+J_t^{(n)},
\end{align}
where
\begin{align*}
M_t^{n,\delta,\varepsilon}
&:=\int_0^t \phi'_{\delta,\varepsilon}(Y_s-Y_s^{(n)}) \left\{ \varphi'(X_s)\sigma(X_s) - \varphi'(X_s^{(n)}) \sigma(X_{\eta_n(s)}^{(n)}) \right\}dW_s,\\
I_t^{(n)}
&:=-\int_0^t \phi'_{\delta,\varepsilon}(Y_s-Y_s^{(n)})
\left\{ \varphi'(X_s^{(n)})b(X_{\eta_n(s)}^{(n)}) +\frac{1}{2} \varphi''(X_s^{(n)}) \sigma^2(X_{\eta_n(s)}^{(n)}) \right\} ds,\\
J_t^{(n)}
&:=\frac{1}{2}\int_0^t \phi''_{\delta,\varepsilon}(Y_s-Y_s^{(n)})
\left| \varphi'(X_s) \sigma(X_s) - \varphi'(X_s^{(n)}) \sigma(X_{\eta_n(s)}^{(n)})  \right|^2 ds.
\end{align*}
In the following, we will estimate $M_t^{n,\delta,\varepsilon}, I_t^{(n)}$ and $J_t^{(n)}$ under various assumptions on $b$ and $\sigma$.

\subsection{Proof of Theorem \ref{Main_1}}

We first consider $I_t^{(n)}$.
Since $\varphi'' = - \frac{2b\varphi'}{\sigma^2}$, 
\begin{align}
	|I_t^{(n)}|
	&\leq \int_0^T \left|\phi'_{\delta,\varepsilon}(Y_s-Y_s^{(n)}) \varphi'(X_s^{(n)}) \right|
	\left|b(X_{\eta_n(s)}^{(n)}) - \frac{b((X_s^{(n)})) \sigma^2(X_{\eta_n(s)}^{(n)})}{\sigma^2(X_s^{(n)})} \right| ds. \notag
\end{align}
Thanks to Lemma \ref{PDE_2} and estimate \eqref{phi2}, we have 
\begin{align}
	|I_t^{(n)}|
	&\leq \ksm^2 C_0 \int_0^T
	\left|b(X_{\eta_n(s)}^{(n)}) \sigma^2(X_s^{(n)}) - b((X_s^{(n)})) \sigma^2(X_{\eta_n(s)}^{(n)}) \right| ds \notag\\
	&\leq \ksm^2 C_0 
	\int_0^T \left\{
	\ksm^2 \left|b(X_s^{(n)}) - b(X_{\eta_n(s)}^{(n)}) \right|
	+\|b\|_{\infty} \left|  \sigma^2(X_s^{(n)}) - \sigma^2(X_{\eta_n(s)}^{(n)}) \right|
	\right\}ds. \notag
\end{align}
It follows from Assumption \ref{Ass_1} that 
\begin{align}\label{esti_I_1}
		|I_t^{(n)}|
&\leq \ksm^4 C_0
	\int_0^T \left\{
	\left|b_A(X_s^{(n)}) - b_A(X_{\eta_n(s)}^{(n)}) \right|
	+\|b_H\|_\beta \left|X_s^{(n)} - X_{\eta_n(s)}^{(n)} \right|^{\beta}
	\right\}ds \notag\\
	&+2\ksm^3 \|b\|_\infty C_0
	\int_0^T 
	\left|  X_s^{(n)} - X_{\eta_n(s)}^{(n)} \right|^{1/2+\alpha} ds.
\end{align}

Now we estimate $J_t^{(n)}$.
From \eqref{phi4}, we have
\begin{align*}
J_t^{n}
&\leq \int_0^T \frac{\1_{[\varepsilon/\delta,\varepsilon]}(|Y_s-Y_s^{(n)}|)}{|Y_s-Y_s^{(n)}| \log \delta}
\left| \varphi'(X_s) \sigma(X_s) - \varphi'(X_s^{(n)}) \sigma(X_{\eta_n(s)}^{(n)})  \right|^2 ds\\
&\leq 3(J_T^{1,n}+J_T^{2,n}+J_T^{3,n}),
\end{align*}
where
\begin{align*}
J_t^{1,n}
&:= \int_0^t \frac{\1_{[\varepsilon/\delta,\varepsilon]}(|Y_s-Y_s^{(n)}|)}{|Y_s-Y_s^{(n)}| \log \delta}
|\sigma(X_s)|^2 \left| \varphi'(X_s) - \varphi'(X_s^{(n)}) \right|^2 ds,\\
J_t^{2,n}
&:=\int_0^t \frac{\1_{[\varepsilon/\delta,\varepsilon]}(|Y_s-Y_s^{(n)}|)}{|Y_s-Y_s^{(n)}| \log \delta}
|\varphi'(X_s^{(n)})|^2 \left| \sigma(X_s) - \sigma(X_s^{(n)})  \right|^2 ds, \\
J_t^{3,n}
&:=\int_0^t \frac{\1_{[\varepsilon/\delta,\varepsilon]}(|Y_s-Y_s^{(n)}|)}{|Y_s-Y_s^{(n)}| \log \delta}
|\varphi'(X_s^{(n)})|^2 \left| \sigma(X_s^{(n)}) - \sigma(X_{\eta_n(s)}^{(n)})  \right|^2 ds.
\end{align*}

From Lemma \ref{PDE_2} (ii), $\varphi'$ is Lipschitz continuous with Lipschitz constant $\|\varphi''\|_{\infty}$.
Hence, we have
\begin{align}\label{esti_J1}
	J_T^{1,n}
	&\leq \frac{\ksm^2 \|\varphi''\|_{\infty}^2}{\log \delta} \int_0^T \frac{\1_{[\varepsilon/\delta,\varepsilon]}(|Y_s-Y_s^{(n)}|)}{|Y_s-Y_s^{(n)}|}
	\left| X_s - X_s^{(n)} \right|^2 ds \notag\\
	&\leq \frac{\ksm^2 \|\varphi''\|_{\infty}^2 C_0^2}{\log \delta}
	\int_0^T\1_{[\varepsilon/\delta,\varepsilon]}(|Y_s-Y_s^{(n)}|) \left| Y_s - Y_s^{(n)} \right| ds \notag\\
	&\leq \frac{4\ksm^6 C_0^4 \|b\|_\infty^2 T \varepsilon}{\log \delta}
\end{align}
and since $\sigma$ is $(\frac 12 +\alpha)$-H\"older continuous, we have
\begin{align}\label{esti_J2}
	J_T^{2,n}
	&\leq \frac{\ksm^2 C_0^2 }{\log \delta} \int_0^T \frac{\1_{[\varepsilon/\delta,\varepsilon]}(|Y_s-Y_s^{(n)}|)}{|Y_s-Y_s^{(n)}|}
	\left| X_s - X_s^{(n)}  \right|^{1+2\alpha} ds \notag\\
	&\leq \frac{\ksm^2 C_0^{3+2\alpha}}{\log \delta} \int_0^T \1_{[\varepsilon/\delta,\varepsilon]}(|Y_s-Y_s^{(n)}|)
	\left| Y_s - Y_s^{(n)}  \right|^{2\alpha} ds \notag\\
	&\leq \frac{\ksm^2 C_0^{3+2\alpha} T \varepsilon^{2\alpha}}{\log \delta},
\end{align}
and
\begin{align}\label{esti_J3}
J_T^{3,n}
&\leq \frac{\ksm^2 C_0^2  \delta }{\varepsilon \log \delta}
 \int_0^T \left|X_s^{(n)} - X_{\eta_n(s)}^{(n)} \right|^{1+2\alpha} ds.
\end{align}
Therefore, from \eqref{esti_X1}, \eqref{esti_X2}, \eqref{esti_I_1}, \eqref{esti_J1}, \eqref{esti_J2} and \eqref{esti_J3}, for any stopping time $\tau \in \mathcal{T}$,
\begin{align}\label{esti_X3}
	&|X_{\tau}-X_\tau^{(n)}| \leq C_0\varepsilon + C_0 M_{\tau}^{n,\delta,\varepsilon} \notag \\
	&+ \ksm^4 C_0^2
	\int_0^T \left\{
	\left|b_A(X_s^{(n)}) - b_A(X_{\eta_n(s)}^{(n)}) \right|
	+\|b_H\|_\beta \left|X_s^{(n)} - X_{\eta_n(s)}^{(n)} \right|^{\beta}
	\right\}ds \notag\\
	&+2\ksm^3 \|b\|_\infty C_0^2
	\int_0^T 
	\left|  X_s^{(n)} - X_{\eta_n(s)}^{(n)} \right|^{1/2+\alpha} ds \notag\\
	&+ \frac{12\ksm^6 C_0^5 \|b\|_\infty^2T \varepsilon}{\log \delta} 
	+ \frac{3\ksm^2 C_0^{4+2\alpha} T \varepsilon^{2\alpha}}{\log \delta} 
	+ \frac{3\ksm^2 C_0^3  \delta }{\varepsilon \log \delta}
 \int_0^T \left|X_s^{(n)} - X_{\eta_n(s)}^{(n)} \right|^{1+2\alpha} ds.
	\end{align}
Note that since $\phi'$, $\varphi'$ and $\sigma$ are bounded, $(M_t^{n,\delta,\varepsilon})_{0 \leq t \leq T}$ is martingale, so expectation of $M_{\tau}^{n,\delta,\varepsilon}$ equals to zero.

If $\alpha \in (0,1/2]$, then by choosing $\varepsilon=n^{-1/2}$ and $\delta =2$, the estimate \eqref{esti_X3} becomes
\begin{align}\label{esti_X3_1}
	&|X_{\tau}-X_\tau^{(n)}| \leq \frac{C_0}{n^{1/2}} + C_0 M_{\tau}^{n,2,n^{-1/2}} \notag \\
	&+ \ksm^4 C_0^2
	\int_0^T \left\{
	\left|b_A(X_s^{(n)}) - b_A(X_{\eta_n(s)}^{(n)}) \right|
	+\|b_H\|_\beta \left|X_s^{(n)} - X_{\eta_n(s)}^{(n)} \right|^{\beta}
	\right\}ds \notag\\
	&+2\ksm^3 \|b\|_\infty C_0^2
	\int_0^T 
	\left|  X_s^{(n)} - X_{\eta_n(s)}^{(n)} \right|^{1/2+\alpha} ds \notag\\
	&+ \frac{12\ksm^6 C_0^5 \|b\|_\infty^2 T }{n^{1/2}\log 2} 
	+ \frac{3\ksm^2 C_0^{4+2\alpha} T }{n^{\alpha}\log 2} 
	+ \frac{6\ksm^2 C_0^3  n^{1/2} }{ \log 2}
 \int_0^T \left|X_s^{(n)} - X_{\eta_n(s)}^{(n)} \right|^{1+2\alpha} ds.
\end{align}
By taking an expectation in \eqref{esti_X3_1} it follows from Lemma \ref{Lem} and Lemma \ref{Lem_1} with $q=1$ that
\begin{align}\label{esti_X5}
\sup_{\tau \in \mathcal{T}} \e[|X_{\tau}-X_\tau^{(n)}|] 
&\leq \frac{C_1}{n^{\frac{\beta}{2} \wedge \alpha}},
\end{align}
where
\begin{align*}
	C_1 = \ksm^6 C_0^5 \Big(  1 + \frac{3T(1+4\|b\|_\infty^2)}{\log 2} + C(1 + T\|b_H\|_\beta + 2\|b\|_\infty T + \frac{6T}{\log 2})\Big),
\end{align*}
where $C$ is a constant depending on $\alpha, \beta, T, \|b_A\|_{\mathcal{A}}, K_\sigma, \|b\|_\infty$ and $x_0$. Note that $C$ doesn't depend on $\|b\|_{L_1(\real)}$. 
This concludes \eqref{est_l1} for $\alpha \in (0,1/2]$.

If $\alpha =0$, then by choosing $\varepsilon=(\log n)^{-1}$ and $\delta = n^{1/3}$, the estimate \eqref{esti_X3} becomes
\begin{align}\label{esti_X3_2}
&|X_{\tau}-X_\tau^{(n)}| \leq \frac{C_0}{\log n}  + C_0 M_{\tau}^{n, n^{1/3},(\log n)^{-1}} \notag \\
	&+ \ksm^4 C_0^2
	\int_0^T \left\{
	\left|b_A(X_s^{(n)}) - b_A(X_{\eta_n(s)}^{(n)}) \right|
	+\|b_H\|_\beta \left|X_s^{(n)} - X_{\eta_n(s)}^{(n)} \right|^{\beta}
	\right\}ds \notag\\
	&+2\ksm^3 \|b\|_\infty C_0^2
	\int_0^T 
	\left|  X_s^{(n)} - X_{\eta_n(s)}^{(n)} \right|^{1/2} ds \notag\\
	&+ \frac{36\ksm^6 C_0^5 \|b\|_\infty^2 T }{(\log n)^2} 
	+ \frac{9\ksm^2 C_0^{4} T }{\log n} 
	+ 9\ksm^2 C_0^3  n^{1/3}  \int_0^T \left|X_s^{(n)} - X_{\eta_n(s)}^{(n)} \right| ds.
	\end{align}
By taking an expectation in \eqref{esti_X3_2}, it follows from Lemma \ref{Lem} and \ref{Lem_1} with $q=1$ that
	\begin{align}\label{esti_X6}
	\sup_{\tau \in \mathcal{T}} \e[|X_{\tau}-X_\tau^{(n)}|]
	\leq \frac{C_2}{\log n},
\end{align}
where
\begin{align*}
	C_2:=&\ksm^6 C_0^5 \Big( 1 + 9T + C \big( 1 + T \|b_H\|_\beta + 2T\|b\|_\infty + 36T\|b\|_\infty^2 + 9T\big) \Big),
\end{align*}
where $C$ is defined as in \eqref{esti_X5}. This concludes \eqref{est_l1} for $\alpha =0$.
Finally, the estimate \eqref{est_l1b} follows directly from \eqref{est_l1} and Lemma 3.2 in \cite{GKb}.
\qed

\subsection{Proof of Theorem \ref{Main_4}}
The main idea of the proof is to approximate $b$ by a sequence of functions $(b_m)_{m \in \mathbb{N}} \subset  L^1(\real)$ and to apply Theorem \ref{Main_1} for solution of the SDE with drift coefficient $b_m$. 

For $m \in \n$, we choose a smooth function $g_m \in C^1(\real)$ with support $[-(m+2),m+2]$ such that $g_m=1$ on $[-m,m]$ and $0 \leq g_m(x) \leq 1$ for all $x \in \real$, and $\|g_m'\|_\infty \leq 1$.\\ 
We define $b_m:=b  g_m$. It is easy to verify that 
\begin{itemize} 
\item $\|b_m\|_\infty \leq \|b\|_\infty$;
\item  $b_m \in L^{1}(\real)$ and $\|b_m\|_{L^1(\real)} \leq (2m+2)\|b\|_\infty$; 
\item  $b_A  g_m \in \mathcal{A}$ and  $\|b_A g_m\|_{\mathcal{A}} \leq 3\|b_A\|_{\mathcal{A}}$;
\item $b_H  g_m \in H^{\beta}$ and $\|b_Hg_m\|_\beta \leq 2\|b_H\|_\beta$  for all $m$.
\end{itemize}
Let $\overline{X}^m$ and $\overline{X}^{m,n}$ be a unique solution of SDE \eqref{SDE_1} with drift $b_m$ and its Euler-Maruyama approximation, respectively.
Then it holds that
\begin{align}\label{esti_X7}
	\sup_{\tau \in \mathcal{T}} \e[|X_{\tau}-X_\tau^{(n)}|]
	&\leq \sup_{\tau \in \mathcal{T}}
	\left\{
	\e[|X_{\tau}-\overline{X}_\tau^{m}|]
	+\e[|\overline{X}_\tau^{m,n}-X_\tau^{(n)}|]
	+\e[|\overline{X}_\tau^{m}-\overline{X}_\tau^{m,n}|]
	\right\}.
\end{align}
From \eqref{esti_X5} and \eqref{esti_X6}, it holds that
\begin{align}\label{esti_X8}
\sup_{\tau \in \mathcal{T}} \e[|\overline{X}_\tau^{m}-\overline{X}_\tau^{m,n}|]
&\leq
\left\{ \begin{array}{ll}
\displaystyle \frac{C_3 e^{20\ksm^2 \|b\|_\infty m}}{\log n} &\text{ if } \alpha = 0,  \\
\displaystyle \frac{C_3 e^{20\ksm^2 \|b\|_\infty m}}{n^{\frac{\beta}{2} \wedge \alpha}} &\text{ if } \alpha \in (0, 1/2],
\end{array}\right.
\end{align}
where $C_3\equiv C_3(x_0, \ksm, \|b\|_\infty, T, \alpha, \beta)$ is a finite constant which  depends neither on $n$ nor on $m$.
On the other hand, for any stopping time $\tau \in \mathcal{T}$, it holds that
\begin{align*}
	\{ X_{\tau} \neq \overline{X}_{\tau}^{m} \}
	\subset \{ \sup_{0 \leq t \leq \tau} |X_{t}| \geq m \}
	\subset \left\{ \sup_{0 \leq t \leq \tau} \left|\int_0^t \sigma(X_s)dW_s \right| \geq m - |x_0| - \|b\|_{\infty} T \right\}.
\end{align*}
Since $\langle \int_0^{\cdot} \sigma(X_s) dW_s \rangle_{\tau} \leq \ksm^2 T$ almost surely, from Proposition 6.8 in \cite{Shigekawa}, we have
\begin{align*} 
	\p( X_{\tau} \neq \overline{X}_{\tau}^{m})
	&\leq 2 \exp\left(- \frac{(m-|x_0|-\|b\|_{\infty} T)^2}{2 \ksm^2T} \right) \notag\\
	&\leq 2 \exp\left( \frac{(|x_0|+\|b\|_{\infty} T)^2}{2 \ksm^2T} \right) \exp\left(- \frac{m^2}{4 \ksm^2T} \right).
\end{align*}
Since $\mathbb{E}[\sup_{0 \leq t \leq T} |X_t|^2] \vee \mathbb{E}[\sup_{0 \leq t \leq T} |\overline{X}^m_t|^2] \leq C_4 := 3(x_0^2 + T^2 \|b\|_\infty^2 + 12 \ksm^2T)$, we have 
\begin{align} \label{esti_X10}
\Big(\mathbb{E}[|X_\tau - \overline{X}^m_\tau|] \Big)^2 &=\Big(\mathbb{E}[|X_\tau - \overline{X}^m_\tau| \1_{\{ X_{\tau} \neq \overline{X}_{\tau}^{m}\}} ] \Big)^2 \notag \\
 &\leq \mathbb{E}[|X_\tau - \overline{X}^m_\tau|^2]  \p( X_{\tau} \neq \overline{X}_{\tau}^{m}) \notag\\
& \leq C_5^2 \exp\left(- \frac{m^2}{4 \ksm^2T} \right),
\end{align}
where $C_5^2 =  4C_4\exp\left( \frac{(|x_0|+\|b\|_{\infty} T)^2}{2 \ksm^2T} \right)$. 
In the same way, we have
\begin{align} \label{esti_X11}
\Big(\mathbb{E}[|X^{(n)}_\tau - \overline{X}^{m,n}_\tau|] \Big)^2 
& \leq C_5^2 \exp\left(- \frac{m^2}{4 \ksm^2T} \right).
\end{align}

If $\alpha \in (0, \frac 12]$, from \eqref{esti_X7}, \eqref{esti_X8}, \eqref{esti_X10} and \eqref{esti_X11}, we have
$$\sup_{\tau \in \mathcal{T}} \e[|X_{\tau}-X_\tau^{(n)}|] \leq C_3 \frac{e^{20\ksm^2 \|b\|_\infty m}}{n^{\frac{\beta}{2} \wedge \alpha}} + 2C_5 \exp\Big(-\frac{m^2}{8\ksm^2 T}\Big).$$
Choose $m^2 = 8\big(\frac{\beta}{2} \wedge \alpha\big)\ksm^2T \log n$, we obtain 
$$\sup_{\tau \in \mathcal{T}} \e[|X_{\tau}-X_\tau^{(n)}|]  \leq  \frac{C_3 e^{C_6 \sqrt{\log n}}}{n^{\frac{\beta}{2} \wedge \alpha}} + \frac{2C_5}{n^{\frac{\beta}{2} \wedge \alpha}},$$
where $C_6 = 40 \ksm^3 \|b\|_\infty \sqrt{2\big(\frac{\beta}{2} \wedge \alpha\big)T}$. This concludes \eqref{est_l4}  for $\alpha\in(0,1/2]$.

If $\alpha = 0$, from \eqref{esti_X7}, \eqref{esti_X8}, \eqref{esti_X10} and \eqref{esti_X11}, we have
$$\sup_{\tau \in \mathcal{T}} \e[|X_{\tau}-X_\tau^{(n)}|] \leq \frac{C_3 e^{20\ksm^2 \|b\|_\infty  m}}{\log n} + 2C_5 \exp \Big(-\frac{m^2}{8\ksm^2 T}\Big).$$
Choose $m^2 = 8\ksm^2T \log(\log n)$, we obtain 
$$\sup_{\tau \in \mathcal{T}} \e[|X_{\tau}-X_\tau^{(n)}|] \leq \frac{C_3 e^{C_7\sqrt{\log (\log n)}}}{\log n} +  \frac{2C_5}{\log n},$$
where $C_7 = 40\sqrt{2T}\ksm^3\|b\|_\infty $. This concludes\eqref{est_l4} for $\alpha =0$.

Finally, the estimate \eqref{est_l4b} follows directly from \eqref{est_l4} and Lemma 3.2 in \cite{GKb}.

\qed

\subsection{Proof of Theorem \ref{Main_2}}

Define $V_t:=\sup_{ 0 \leq s \leq t} |X_s-X_s^{(n)}|$.
To estimate the expectation of $V_T$, we need to estimate the expectation of $\sup_{0 \leq s \leq T} |M_s^{n,\delta,\varepsilon}|$.
Using Burkholder-Davis-Gundy's inequality, we have
\begin{align*}
	\e\left[ \sup_{0 \leq s \leq T} |M_s^{n,\delta,\varepsilon}| \right]
	&\leq \sqrt{32} \e\left[ \langle M^{n,\delta,\varepsilon} \rangle_T^{1/2} \right] \\
	&= \sqrt{32} \e\left[ \left( \int_0^T \left|\phi'_{\delta,\varepsilon}(Y_t-Y_t^{(n)})\right|^2 \left| \varphi'(X_s)\sigma(X_s) - \varphi'(X_s^{(n)}) \sigma(X_{\eta_n(s)}^{(n)}) \right|^2 ds \right)^{1/2} \right].
\end{align*}
Using the fact that $\|\phi'\|_\infty \leq 1$, we have 
\begin{align}
	&\e\left[ \sup_{0 \leq s \leq T} |M_s^{n,\delta,\varepsilon}| \right] \notag\\
	&\leq \sqrt{96}\ksm \e\left[
	\left(
		\int_0^T
		\left| 
			\varphi'(X_s)-\varphi'(X_s^{(n)})	
		\right|^2ds
	 \right)^{1/2}
	 \right] \notag
	 + \sqrt{96} C_0 \e\left[
	 \left( 
		 \int_0^T
		 \left| 
			\sigma(X_s)-\sigma(X_s^{(n)})
		 \right|^2ds
	 \right)^{1/2}
	 \right]\notag\\
	 &+\sqrt{96} C_0 \e\left[
	 \left( 
		 \int_0^T
		 \left| 
			\sigma(X_s^{(n)})-\sigma(X_{\eta_n(s)}^{(n)})
		 \right|^2ds
	 \right)^{1/2}
	 \right]. \notag
	 \end{align}
Since $\sigma$ is H\"older continuous and $\varphi'$ is Lipschitz,  $\e\left[ \sup_{0 \leq s \leq T} |M_s^{n,\delta,\varepsilon}| \right]$ is bounded by 
\begin{align}\label{esti_M_2_2}
 &\sqrt{96} \ksm \|\varphi''\|_{\infty} \e\left[
	 \left(
	 \int_0^T
	 \left| 
		 X_s-X_s^{(n)}
	 \right|^2ds
	 \right)^{1/2}
	 \right]\notag
	 +\sqrt{96} C_0 \ksm \e\left[
	 \left( 
	 \int_0^T
	 \left| 
		X_s-X_s^{(n)}
	 \right|^{1+2\alpha}ds
	 \right)^{1/2}
	 \right]\notag\\
	 &+\sqrt{96} C_0 \ksm  \left(
	 \int_0^T
	 \e\big[|  X_s^{(n)}-X_{\eta_n(s)}^{(n)}|^{1+2\alpha}\big]ds
	 \right)^{1/2}\notag\\
	 &\leq \tilde{c}_1
	 \e\left[
		 A_T^{(n)}
	 \right]	
	 + \tilde{c}_2
	 \e\left[
		 B_T^{(n)}
	 \right]
	 + \frac{\tilde{c}_3}{n^{1/4+\alpha/2}},
\end{align}
where $\tilde{c}_1:=\sqrt{96}\ksm \|\varphi''\|_{\infty}$, $\tilde{c}_2:=\sqrt{96}C_0 \ksm$, $\tilde{c}_3:=\sqrt{96CT}C_0 \ksm$,
\begin{align*}
	A_T^{(n)}
	&:= \left(
	\int_0^T
	\left| 
	X_s-X_s^{(n)}
	\right|^2ds
	\right)^{1/2} \text{ and }
	B_T^{(n)}
	:=\left( 
	\int_0^T
	\left| 
	X_s-X_s^{(n)}
	\right|^{1+2\alpha}ds
	\right)^{1/2}.
\end{align*}
Since $|X_s-X_s^{(n)}| \leq V_T^{(n)}$ for any $s \in [0,T]$, by using Young's inequality $xy \leq \frac{x^2}{4 \tilde{c}_1 C_0} + \tilde{c}_1 C_0y^2$, we have
\begin{align*}
	A_T^{(n)}
	&\leq \left(V^{(n)}_T\right)^{1/2} \left(
	\int_0^T
	\left| 
	X_s-X_s^{(n)}
	\right|ds
	\right)^{1/2}
	\leq \frac{V_T^{(n)}}{4 \tilde{c}_1 C_0} + \tilde{c_1} C_0 \int_0^T |X_s-X_s^{(n)}| ds.
\end{align*}
Hence it holds that
\begin{align}\label{esti_A_2}
	\e[A_T^{(n)}]
	\leq \frac{\e[V_T^{(n)}]}{4 \tilde{c}_1 C_0} + \tilde{c_1} C_0 \int_0^T \e[|X_s-X_s^{(n)}|] ds.
\end{align}
Next we estimate the expectation of $B_T^{(n)}$ and $V_T^{(n)}$.

For $\alpha \in (0,1/2]$, by using Young's inequality $xy \leq \frac{x^2}{4 \tilde{c}_2 C_0} + \tilde{c}_2 C_0 y^2$, we have
\begin{align*}
B_T^{(n)}
&\leq \left(V^{(n)}_T\right)^{1/2} \left(
\int_0^T
\left| 
X_s-X_s^{(n)}
\right|^{2\alpha}ds
\right)^{1/2}
\leq \frac{V_T^{(n)}}{4 \tilde{c}_2 C_0} + \tilde{c}_2C_0 \int_0^T |X_s-X_s^{(n)}|^{2 \alpha} ds.
\end{align*}
Hence it holds from Jensen's inequality that
\begin{align}\label{esti_B_2}
\e[B_T^{(n)}]
\leq \frac{\e[V_T^{(n)}]}{4 \tilde{c}_2 C_0} + \tilde{c}_2 C_0 \int_0^T \e[|X_s-X_s^{(n)}|]^{2 \alpha} ds.
\end{align}
Therefore from \eqref{esti_M_2_2}, \eqref{esti_A_2} and \eqref{esti_B_2}, we obtain
\begin{align}
\e\left[ \sup_{0 \leq s \leq T} |M_{s}^{n,2,n^{-1/2}}| \right] \notag
&\leq \frac{\e[V_T^{(n)}]}{2C_0}
+\tilde{c}_1^2 C_0 \int_0^T \e[|X_s-X_s^{(n)}|] ds\\
&+\tilde{c}_2^2 C_0 \int_0^T \e[|X_s-X_s^{(n)}|]^{2\alpha} ds
+ \frac{\tilde{c}_3}{n^{1/4+\alpha/2}}. \notag 
\end{align}
It follows from \eqref{esti_X5} that 
\begin{align*}
\e\left[ \sup_{0 \leq s \leq T} |M_{s}^{n,2,n^{-1/2}}| \right]
&\leq \frac{\e[V_T^{(n)}]}{2C_0}
+\frac{ (\tilde{c}_1^2 +\tilde{c}_2^2 )T C_0 C_1}{n^{\alpha(\beta \wedge 2\alpha)} }
+ \frac{\tilde{c}_3}{n^{1/4+\alpha/2}}.
\end{align*}
This fact together with \eqref{esti_X3_1} and \eqref{esti_X5} implies that 
\begin{align*}
&\e[V_T^{(n)}]
\leq \frac{ 2 (\tilde{c}_1^2 +\tilde{c}_2^2 )T C_0^2 C_1}{n^{\alpha(\beta \wedge 2\alpha)} }
+ \frac{2 \tilde{c}_3C_0}{n^{1/4+\alpha/2}}
+\frac{2 C_1}{n^{\frac{\beta}{2} \wedge \alpha}}.
\end{align*}
Since $1/4+\alpha/2 \geq 2 \alpha^2$, we have
\begin{align} \label{esti_X5b}
&\e[V_T^{(n)}] \leq \frac{C_8}{n^{\alpha(\beta \wedge 2\alpha)}},
\end{align}
where
$C_8:=2 (\tilde{c}_1^2 +\tilde{c}_2^2 )T C_0^2 C_1	+ 2 \tilde{c}_3C_0	+ 2 C_1.$

For $\alpha = 0$, by Jensen's inequality we have
\begin{align}\label{esti_B_3}
	\e[B_T^{(n)}]
	\leq \left( \int_0^T\e\left[\left|X_s-X_s^{(n)}\right|\right] ds\right)^{1/2}
	\leq \frac{\sqrt{C_2T}}{\sqrt{\log n}}.
\end{align}
Therefore from \eqref{esti_M_2_2}, \eqref{esti_A_2} and \eqref{esti_B_3}, we obtain
\begin{align*}
\e\left[ \sup_{0 \leq s \leq T} \left|M_{s}^{n,n^{1/3},(\log n)^{-1}} \right| \right] \notag
&\leq \frac{\e[V_T]}{4C_0}
+\tilde{c}_1^2 C_0 \int_0^T \e[|X_s-X_s^{(n)}|] ds
+\frac{\tilde{c}_2 \sqrt{C_2T}}{\sqrt{\log n}}
+ \frac{\tilde{c}_3}{n^{1/4+\alpha/2}} \notag \\
&\leq \frac{\e[V_T]}{4C_0}
+\frac{\tilde{c}_1^2 C_0 C_2T}{ \log n}
+\frac{\tilde{c}_2 \sqrt{C_2T}}{\sqrt{\log n}}
+ \frac{\tilde{c}_3}{n^{1/4+\alpha/2}}.
\end{align*}
This fact  together with estimates \eqref{esti_X3_2} and \eqref{esti_X6} implies that 
\begin{align} \label{esti_X6b}
\e[V_T^{(n)}]
\leq \frac{4\tilde{c}_1^2 C_0^2 C_2T}{ 3 \log n}
+\frac{4C_0 \tilde{c}_2 \sqrt{C_2T}}{3 \sqrt{\log n}}
+ \frac{4 C_0 \tilde{c}_3}{3 n^{1/4+\alpha/2}}
+ \frac{4C_2}{3 \log n} 
\leq \frac{C_9}{\sqrt{\log n}},
\end{align}
where
$C_9:=\frac 43 ( \tilde{c}_1^2 C_0^2 C_2T
+C_0 \tilde{c}_2 \sqrt{C_2T}
+  C_0 \tilde{c}_3
+ C_2).$
Hence we finish the proof of Theorem \ref{Main_2}.
\qed

\subsection{Proof of Theorem \ref{Main_4b}}
We denote $b_m, \overline{X}^m$ and $\overline{X}^{m,n}$ as in the proof of Theorem \ref{Main_4}. Then it holds that 
\begin{align}
&	 \e[\sup_{0 \leq t \leq T} |X_{t}-X_t^{(n)}|] \notag\\ 
	&\leq 	\left\{
	\e[\sup_{0 \leq t \leq T} |X_{t}-\overline{X}_t^{m}|]
	+\e[\sup_{0 \leq t \leq T} |\overline{X}_t^{m,n}-X_t^{(n)}|]
	+\e[\sup_{0 \leq t \leq T} |\overline{X}_t^{m}-\overline{X}_t^{m,n}|]
	\right\}. \label{esti_X7b}
\end{align}
From \eqref{esti_X5b} and \eqref{esti_X6b}, it holds that
\begin{align}\label{esti_X8b}
\e[\sup_{0 \leq t \leq T} |\overline{X}_t^{m}-\overline{X}_t^{m,n}|]
&\leq
\left\{ \begin{array}{ll}
\displaystyle \frac{C_{10} e^{72\ksm^2 \|b\|_\infty m}}{\sqrt{\log n}} &\text{ if } \alpha = 0,  \\
\displaystyle \frac{C_{10} e^{72\ksm^2 \|b\|_\infty m}}{n^{\alpha (\beta \wedge 2 \alpha)}} &\text{ if } \alpha \in (0, 1/2],
\end{array}\right.
\end{align}
where $C_{10}\equiv C_{10}(x_0, \ksm, \|b\|_\infty, T, \alpha, \beta)$ is a finite constant which  depends neither on $n$ nor on $m$.

Since 
$\{ \sup_{0 \leq t \leq T} |X_{t}-\overline{X}_t^{m}|  \neq 0 \} \subset  \{ \sup_{0 \leq t \leq T} |X_{t}| \geq m \},$
it follows from a similar argument as in the proof of Theorem \ref{Main_2} that 
\begin{align} \label{esti_X10b}
\Big(\mathbb{E}[\sup_{0 \leq t \leq T} |X_t - \overline{X}^m_t|] \Big)^2 
 \leq C_5^2 \exp\left(- \frac{m^2}{4 \ksm^2T} \right),
\end{align}
where $C_5^2 =  4C_4\exp\left( \frac{(|x_0|+\|b\|_{\infty} T)^2}{2 \ksm^2T} \right)$. 
In the same way, we have
\begin{align} \label{esti_X11b}
\Big(\mathbb{E}[\sup_{0 \leq t \leq T}|X^{(n)}_t - \overline{X}^{m,n}_t|] \Big)^2 
& \leq C_5^2 \exp\left(- \frac{m^2}{4 \ksm^2T} \right).
\end{align}

If $\alpha \in (0, \frac 12]$, from \eqref{esti_X7b}, \eqref{esti_X8b}, \eqref{esti_X10b} and \eqref{esti_X11b}, we have
$$\e[\sup_{0 \leq t \leq T} |X_t-X_t^{(n)}|] \leq C_{10} \frac{e^{72\ksm^2 \|b\|_\infty m}}{n^{\alpha(\beta \wedge 2\alpha)}} + 2C_5 \exp\Big(-\frac{m^2}{8\ksm^2 T}\Big).$$
Choose $m^2 = 8\alpha (\beta \wedge 2\alpha)\ksm^2T \log n$, we obtain 
$$\e[\sup_{0 \leq t \leq T} |X_t-X_t^{(n)}|] \leq  \frac{C_{10} e^{C_{11} \sqrt{\log n}}}{n^{\frac{\beta}{2} \wedge \alpha}} + \frac{2C_5}{n^{\frac{\beta}{2} \wedge \alpha}},$$
where $C_{11}  = 144 \ksm^3 \|b\|_\infty \sqrt{2T\alpha(\beta \wedge 2\alpha)}$. This concludes the statement for $\alpha\in(0,1/2]$.

If $\alpha = 0$, from \eqref{esti_X7b}, \eqref{esti_X8b}, \eqref{esti_X10b} and \eqref{esti_X11b}, we have
$$\e[\sup_{0 \leq t \leq T} |X_t-X_t^{(n)}|] \leq \frac{C_{10} e^{72\ksm^2 \|b\|_\infty  m}}{\sqrt{\log n}} + 2C_5 \exp \Big(-\frac{m^2}{8\ksm^2 T}\Big).$$
Choose $m^2 = 8\ksm^2T \log(\log n)$, we obtain 
$$\e[\sup_{0 \leq t \leq T} |X_t-X_t^{(n)}|] \leq \frac{C_3 e^{C_{12}\sqrt{\log (\log n)}}}{\sqrt{\log n}} +  \frac{2C_5}{\sqrt{\log n}},$$
where $C_{12} = 144\sqrt{2T}\ksm^3\|b\|_\infty $. This concludes the statement for $\alpha =0$.

\subsection{Proof of Theorem \ref{Main_3}}

To prove Theorem \ref{Main_3}, we need  the following Gronwall type inequality.	
\begin{Lem}[\cite{Gyongy} Lemma 3.2.] \label{Lem2_1}
	Let $(Z_t)_{t \geq 0}$ be a nonnegative continuous stochastic process and set $V_t:= \sup_{s\leq t}Z_s$.
	Assume that for some $r>0$, $q\geq 1$, $\rho \in [1,q]$ and some constants $\overline{C}_0$ and $\xi \geq 0$, 
	\begin{align*}
	\e[V_t^r] \leq \overline{C}_0 \e\left[ \left( \int_0^t V_s ds \right)^r\right] + \overline{C}_0 \e\left[ \left( \int_0^t Z^{\rho}_s ds \right)^{r/q} \right] +\xi < \infty
	\end{align*}
	for all $t\geq 0$. Then for each $T \geq 0$ the following statements hold.\\
	(i) If $\rho=q$ then there exists a constant $\overline{C}_1$ depending on $\overline{C}_0,T,q$ and $r$ such that
	\begin{align*}
	\e[V_T^r] \leq \overline{C}_1 \xi.
	\end{align*}
	(ii) If $r\geq q$ or $q+1-\rho < r < q $ hold, then there exists constant $\overline{C}_2$ depending on $\overline{C}_0, T, \rho, q $ and $r$, such that
	\begin{align*}
	\e[V_T^r] \leq \overline{C}_2 \xi + \overline{C}_2 \int_0^T\e[Z_s]ds.
	\end{align*}
\end{Lem}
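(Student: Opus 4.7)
The plan is to reduce the stochastic hypothesis to a deterministic integral inequality for $g(t) := \e[V_t^r]$ and then invoke the classical Gronwall lemma. The essential work is to bound each of the two integral terms on the right-hand side either by $\int_0^t g(s)\,ds$ (at the cost of absorbing a small multiple of $\e[V_t^r]$ back into the left-hand side) or, in case (ii), by $\int_0^T \e[Z_s]\,ds$.

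For the first term $\e[(\int_0^t V_s\,ds)^r]$, I would split on whether $r \geq 1$ or $r < 1$. If $r \geq 1$, Jensen's inequality applied to the uniform measure on $[0,t]$ gives $(\int_0^t V_s\,ds)^r \leq t^{r-1}\int_0^t V_s^r\,ds$, producing a term $t^{r-1}\int_0^t g(s)\,ds$ after taking expectations. If $r < 1$, monotonicity of $V$ gives $\int_0^t V_s\,ds \leq t V_t$, so the term is at most $t^r \e[V_t^r]$; this is absorbed by first proving the estimate on a short sub-interval on which $\overline{C}_0 t^r \leq 1/2$ and then iterating over $[0,T]$.

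For the second term in case (i), where $\rho = q$, I would use $Z_s \leq V_s$ to dominate the integrand by $V_s^q$. For $r \geq q$, Jensen reduces $(\int_0^t V_s^q\,ds)^{r/q}$ to $t^{r/q - 1}\int_0^t V_s^r\,ds$, while for $r < q$ the bound $(\int_0^t V_s^q\,ds)^{r/q} \leq t^{r/q} V_t^r$ is handled by the same short-interval absorption argument. Either way one arrives at $g(t) \leq A + B\int_0^t g(s)\,ds$ with $A = O(\xi)$, and Gronwall delivers $g(T) \leq \overline{C}_1 \xi$.

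Case (ii) is the delicate part. The key step is the pointwise interpolation $Z_s^\rho \leq V_t^{\rho-1} Z_s$ (using $Z_s \leq V_t$ for $s \leq t$ and $\rho \geq 1$), which gives
\[
\Big(\int_0^t Z_s^\rho\,ds\Big)^{r/q} \leq V_t^{r(\rho-1)/q}\Big(\int_0^t Z_s\,ds\Big)^{r/q}.
\]
Young's inequality with conjugate exponents $p = q/(\rho - 1)$, $p' = q/(q - \rho + 1)$ and a small weight $\varepsilon$ separates this into $\varepsilon V_t^r$ (absorbed) and $C_\varepsilon(\int_0^t Z_s\,ds)^{r/(q - \rho + 1)}$. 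The hypothesis $r \geq q$ or $q + 1 - \rho < r$ is precisely what forces the exponent $r/(q - \rho + 1) \geq 1$; Jensen combined with a further bound of the form $Z_s^{r/(q-\rho+1) - 1} \leq V_t^{r/(q-\rho+1) - 1}$ and a subsequent Young step reduces the remainder, after finitely many iterations, to $\int_0^t\e[Z_s]\,ds$ plus absorbable $V_t^r$ pieces. The main obstacle I expect is the bookkeeping of this iterative interpolation: each Young step spawns a new exponent and a new $V_t^{\text{small}}$ factor, and one must check that the allowable ranges $r \geq q$ and $q + 1 - \rho < r < q$ keep every intermediate exponent admissible until the recursion terminates with a clean $\int_0^t\e[Z_s]\,ds$ and a small residual multiple of $\e[V_t^r]$. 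Once the deterministic inequality $g(t) \leq A + B\int_0^t g(s)\,ds + D\int_0^T\e[Z_s]\,ds$ is obtained, Gronwall yields (ii).
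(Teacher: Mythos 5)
First, note that the paper does not actually prove this lemma: it is quoted verbatim from \cite{Gyongy} (Lemma 3.2 there) and used as a black box, so your attempt can only be compared with the proof in that reference. Your overall plan --- reduce to a deterministic inequality for $g(t)=\e[V_t^r]$ via the interpolation $Z_s^\rho\le V_t^{\rho-1}Z_s$, use Young's inequality to peel off an absorbable $\varepsilon V_t^r$, and finish with Gronwall --- is the right one and is essentially the strategy of the cited proof. Part (i), the treatment of the first term, and the short-interval absorption device are all fine.

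The genuine gap is in case (ii), at the point where you dispose of the leftover power $\bigl(\int_0^t Z_s\,ds\bigr)^{\kappa}$, $\kappa=r/(q-\rho+1)$, by ``finitely many iterations'' of the bound $Z_s^{\kappa-1}\le V_t^{\kappa-1}$ followed by Young. Track the exponents: each round replaces $\kappa$ by $f(\kappa)=r/(r-\kappa+1)$, whose fixed points are $1$ and $r$; starting from $1<\kappa<r$ the sequence decreases towards $1$ but never reaches it, so the recursion does not terminate, and since every round also spawns a fresh $\varepsilon\,\e[V_t^r]$ whose Young constants $C_\varepsilon$ must be summed, the argument as written does not close. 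Two standard repairs: (a) perform a \emph{single} interpolation, writing $\bigl(\int_0^t Z_s\,ds\bigr)^{r/q}\le (tV_t)^{r/q-\theta}\bigl(\int_0^t Z_s\,ds\bigr)^{\theta}$ with $\theta=\frac{r(q-\rho)}{q(r-1)}$ chosen so that one application of Young with conjugate exponents $\frac{r}{\rho r/q-\theta}$ and $\frac{1}{\theta}$ lands exactly on $\varepsilon V_t^r+C_\varepsilon\int_0^t Z_s\,ds$; the admissibility requirements $0\le\theta\le\min(1,r/q)$ (and $\rho r\ge q$) are precisely equivalent to the hypotheses $r\ge q$ or $q+1-\rho<r<q$, which is where these conditions come from. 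Or (b) stop after your first Young step and use the elementary bound $x^{\kappa}\le x+x^{r}$, valid for $x\ge0$ and $1\le\kappa\le r$, which sends the large-$x$ part of $\bigl(\int_0^t Z_s\,ds\bigr)^{\kappa}$ into $\bigl(\int_0^t V_s\,ds\bigr)^{r}$ --- i.e.\ into the term already controlled by the Gronwall argument --- and the small-$x$ part directly into $\int_0^t\e[Z_s]\,ds$. Either repair completes your proof; without one of them the key step of (ii) fails.
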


\begin{proof}[Proof of Theorem \ref{Main_3}]
Throughout this proof, the letter $K$ denotes some positive constant whose value can change from line to line.
The constant $K$ may depend on $K_\sigma, \|b_A\|_{\mathcal{A}},$ $\|b_H\|_\beta, \|b\|_{\infty}, \|b\|_{L^1(\real)}, T, x_0, \alpha$ and $\beta$ but it does not depend on $n$.
We will use again the estimates \eqref{esti_X1} and \eqref{esti_X2}.
Let us first consider  the expectation of $\sup_{0 \leq s \leq T} |M_s^{n,\delta,\varepsilon}|^{p}$.
Using Burkholder-Davis-Gundy's inequality, for any $t \in [0,T]$, $\delta \in (1,\infty)$ and $\varepsilon \in (0,1)$, we have
\begin{align*} 
	\e\left[ \sup_{0 \leq s \leq t} |M_s^{n,\delta,\varepsilon}|^p \right]
	&\leq K \e\left[ \left( \int_0^t \left| \varphi'(X_s)\sigma(X_s) - \varphi'(X_s^{(n)}) \sigma(X_{\eta_n(s)}^{(n)}) \right|^2 ds \right)^{p/2} \right].
\end{align*}
Note that for any bounded Lipschitz continuous function $g$ with Lipschitz constant $L>0$, it holds that
\begin{align}\label{propaty_holder}
	\sup_{x,y \in \real, x \neq y} \frac{|g(x)-g(y)|}{|x-y|^{\gamma}} \leq (2 \|g\|_{\infty})^{1-\gamma} L^{\gamma}, \text{ for any } \gamma \in (0,1].
\end{align}
Since $\sigma$ is bounded and $1/2+\alpha$-H\"older continuous, $\phi'$ is bounded by $1$ and $\varphi', \varphi''$ are bounded, by using \eqref{propaty_holder} for $g=\varphi'$ with $\gamma=1/2+\alpha$, we have
\begin{align}\label{esti_M_3_2}
\e\left[ \sup_{0 \leq s \leq t} |M_s^{n,\delta,\varepsilon}|^p \right]
&\leq K
\e\left[
\left(\int_0^t
\left| \varphi'(X_s) - \varphi'(X_s^{(n)})
\right|^2 ds \right)^{p/2} \right] \notag \\
&+ K\e\left[ \left( \int_0^t
\left| \sigma(X_s) - \sigma(X_s^{(n)})
\right|^2 ds \right)^{p/2}  \right] \notag \\
&+ K\e\left[
\left( \int_0^t
\left| \sigma(X_s^{(n)}) - \sigma(X_{\eta_n(s)}^{(n)})
\right|^2 ds \right)^{p/2} \right] \notag\\
&\leq K \e\left[ \left( \int_0^t
\left|
X_s - X_s^{(n)}
\right|^{1+2\alpha} ds \right)^{p/2}  \right] + K\e\left[
\left( \int_0^T
\left| X_s^{(n)} - X_{\eta_n(s)}^{(n)}
\right|^{1+2\alpha} ds \right)^{p/2} \right] \notag\\
&\leq
K \e\left[ \left(\int_0^t \left|X_s - X_s^{(n)}\right|^{1+2\alpha} ds \right)^{p/2} \right]
+ \frac{K}{n^{p/4+p\alpha/2}}.
\end{align}

For $\alpha \in (0,1/2]$, it follows from \eqref{esti_X3_1} that for any $t \in [0,T]$, we have
\begin{align}\label{esti_X_sup_p_1}
\left| V_t^{(n)}\right|^{p} \leq &
K \Bigg\{
\frac{1}{n^{p/2}} +  \sup_{0 \leq s \leq t} \left|M_{s}^{n,2,n^{-1/2}} \right|^p + \int_0^T \left\{
\left|b_A(X_s^{(n)}) - b_A(X_{\eta_n(s)}^{(n)}) \right|^p
+K\left|X_s^{(n)} - X_{\eta_n(s)}^{(n)} \right|^{p \beta}
\right\}ds \notag \\ 
 &+ \int_0^T 
\left|  X_s^{(n)} - X_{\eta_n(s)}^{(n)} \right|^{p/2+p\alpha} ds 
+ \frac{1}{n^{p/2}}
+ \frac{1}{n^{p\alpha}} 
+  n^{p/2}
\int_0^T \left|X_s^{(n)} - X_{\eta_n(s)}^{(n)} \right|^{p+2p\alpha} ds
\Bigg\}.
\end{align}
By taking the expectation on \eqref{esti_X_sup_p_1}, from Lemma \ref{Lem} and \ref{Lem_1}, we have
\begin{align}\label{esti_X_sup_p_2}
	\e\left[ \left| V_t^{(n)}\right|^{p} \right]
	&\leq K \e\left[ \sup_{0 \leq s \leq t} \left|M_{s}^{n,2,n^{-1/2}} \right|^p\right]
	+ \frac{K}{n^{\frac12 \wedge \frac{p\beta}{2}\wedge p \alpha}}.
\end{align}
Since $\alpha \leq 1/4+\alpha/2$, from \eqref{esti_M_3_2} and \eqref{esti_X_sup_p_2}, we obtain
\begin{align}\label{esti_X_sup_p_3}
\e\left[ \left| V_s^{(n)}\right|^{p} \right]
&\leq K
\e\left[ \left(\int_0^t \left|X_s - X_s^{(n)}\right|^{1+2\alpha} ds\right)^{p/2} \right]
+ \frac{K}{n^{\frac12 \wedge \frac{p\beta}{2}\wedge p \alpha}}.
\end{align}
If $\alpha=1/2$,  from Jensen's inequality, we have
\begin{align*}
\e\left[ \left| V_t^{(n)}\right|^{p} \right]
\leq K
 \int_0^t \e\left[ \left|X_s - X_s^{(n)}\right|^{p}  \right]ds
+ \frac{K}{n^{\frac12 \wedge \frac{p\beta}{2}}} 
\leq K \int_0^t \e\left[ \left| V_s^{(n)}\right|^{p}  \right]ds
+ \frac{K}{n^{\frac12 \wedge \frac{p\beta}{2}}}.
\end{align*}
Using Gronwall's inequality, we have
\begin{align} \label{finalK1}
	\e\left[ \left| V_s^{(n)}\right|^{p} \right]
	\leq \frac{K}{n^{\frac12 \wedge \frac{p\beta}{2}}}.
\end{align}
If $\alpha \in (0,1/2)$, then from \eqref{esti_X_sup_p_3}, for any $t \in [0,T]$ we have
\begin{align*}
\e\left[ \left| V_s^{(n)}\right|^{p} \right]
&\leq K \e\left[ \left( \int_0^t V_s^{(n)} ds \right)^p\right]
+K \e\left[ \left(\int_0^t \left|X_s - X_s^{(n)}\right|^{1+2\alpha} ds \right)^{p/2} \right] + \frac{K}{n^{\frac12 \wedge \frac{p\beta}{2}\wedge p \alpha}}.
\end{align*}
Using Lemma \ref{Lem2_1}-(ii) with $r=p$, $q=2$, $\rho=1+2\alpha$, $\xi=Kn^{-(\frac12 \wedge \frac{p\beta}{2}\wedge p \alpha)}$ and Theorem \ref{Main_1}, we have
\begin{align} \label{finalK2}
	\e\left[ \left| V_t^{(n)}\right|^{p} \right]
	\leq \frac{K}{n^{\frac12 \wedge \frac{p\beta}{2}\wedge p \alpha}}
	+K \int_0^t \left[\left| X_s-X_s^{(n)} \right|\right]ds
	\leq \frac{K}{n^{\frac12 \wedge \frac{p\beta}{2}\wedge p \alpha}}
	+\frac{K}{n^{\frac{\beta}{2} \wedge \alpha}} 
	\leq \frac{K}{n^{\frac{\beta}{2} \wedge \alpha}}.
\end{align}
This concludes the case of $\alpha \in (0,1/2]$.

For $\alpha=0$, it follows from \eqref{esti_X3_2} that for any $t \in [0,T]$, we have
\begin{align}\label{esti_X_sup_p_6}
	\left| V_t^{(n)} \right|^p
	&\leq
	K\Bigg\{
	\frac{1}{(\log n)^p}
	+  \sup_{0 \leq s \leq t} \left|M_{s}^{n,n^{1/3},(\log n)^{-1}} \right|^p \notag \\
	&+	\int_0^t \left\{
	\left|b_A(X_s^{(n)}) - b_A(X_{\eta_n(s)}^{(n)}) \right|^p
	+K^p\left|X_s^{(n)} - X_{\eta_n(s)}^{(n)} \right|^{p\beta}
	\right\}ds \notag\\
	&+ 	\int_0^t 
	\left|  X_s^{(n)} - X_{\eta_n(s)}^{(n)} \right|^{p/2} ds 
	+ \frac{1}{(\log n)^{2p}} \notag\\
	&+ \frac{1}{(\log n)^p} 
	+n^{p/3} \int_0^T \left|X_s^{(n)} - X_{\eta_n(s)}^{(n)} \right|^p ds	\Bigg\}.
\end{align}
By taking the expectation on \eqref{esti_X_sup_p_6}, from Lemma \ref{Lem} and \ref{Lem_1}, we have
\begin{align}\label{esti_X_sup_p_7}
	\e\left[ \left| V_t^{(n)} \right|^p \right]
	\leq K \e\left[\sup_{0 \leq s \leq t} \left|M_{s}^{n,n^{1/3},(\log n)^{-1}} \right|^p \right]
	+ \frac{K}{(\log n)^{p}}.
\end{align}
From \eqref{esti_M_3_2} and \eqref{esti_X_sup_p_7}, we obtain
\begin{align*}
	\e\left[ \left| V_t^{(n)} \right|^p \right] \leq 
	& K\e\left[ \left(\int_0^t \left|X_s - X_s^{(n)}\right| ds \right)^{p/2} \right]
	+ \frac{K}{(\log n)^{p}}.
\end{align*}
Using Lemma \ref{Lem2_1}-(ii) with $r=p$, $q=2$, $\rho=1$, $\xi=K (\log n)^{-p}$ and Theorem \ref{Main_1}, we have
\begin{align} \label{finalK3}
\e\left[ \left| V_T^{(n)} \right|^p \right]
&\leq 
\frac{K}{(\log n)^{p}}
+K \int_0^T \e\left[\left|X_s - X_s^{(n)}\right| \right]ds \notag \\
&\leq \frac{K}{\log n}.
\end{align}
This concludes the case for $\alpha=0$.
\end{proof}

\begin{Rem}
Note that it is hardly possible to obtain a $L^p$ bound for the error if we remove the condition $b \in L^1(\real)$ by following the method used in the proofs of Theorem \ref{Main_4} and \ref{Main_4b} since
a careful tracking of constants $K$ will show that the  constants $K$ in \eqref{finalK1}, \eqref{finalK2} and \eqref{finalK3}   increase with the order of double exponent with respect to 
 $\|b\|_{L^1(\real)}$. This makes the localization technique for $b$ not applicable.   

\end{Rem}

\section*{Acknowledgements}
The authors thank Prof. Arturo Kohatsu-Higa for his helpful comments.
This research is funded by Vietnam National Foundation for Science and Technology Development (NAFOSTED) under grant number 101.03-2014.14. The second author was supported by  KOKUSAITEKI Research Fund of Ritsumeikan University and JSPS KAKENHI Grant Number 16J00894.

\end{document}